\numberwithin{equation}{section}
\newtheorem{theorem}{Theorem}[section]
\newtheorem{lemma}[theorem]{Lemma}
\newtheorem{corollary}[theorem]{Corollary}
\newtheorem{proposition}[theorem]{Proposition}
\theoremstyle{remark}
\newtheorem{remark}[theorem]{Remark}
\newtheorem{claim}[theorem]{Claim}
\newtheorem*{theorem*}{Theorem}
\theoremstyle{definition}
\newtheorem{definition}[theorem]{Definition}
\theoremstyle{definition}
\newcommand{\eps}{\varepsilon}
\DeclareMathOperator{\bee}{\mathcal{B}_{\eps}}
\DeclareMathOperator{\bei}{\mathcal{B}_{\eps_{i}}}
\DeclareMathOperator{\het}{\bf{H}}
\title{A strong min-max property for level sets of Phase Transitions}
\author{\'Erico Melo Silva} 
\address{Department of Mathematics, Princeton University, Washington Road, Princeton, NJ, 08540, USA}
\email{easilva@princeton.edu}
\date{\today}
\begin{document}

\begin{abstract}
We show that certain functions whose nodal sets lie near a fixed nondegenerate minimal hypersurface satisfy a strong min-max principle for the Allen--Cahn energy which is analogous to the strong min-max principle for non-degenerate minimal hypersurfaces first proved by Brian White \cite{white1994strong}.
\end{abstract}

\maketitle

\section{Introduction}\label{IntroSec}

Given a Riemannian manifold $(M^{n}, g)$ a minimal hypersurface $\Sigma^{n -1} \subset M$ is one which arises as a \textit{critical point} for the area functional. The variational properties of minimal hypersurfaces have been a rich source of study for geometric analysis. In particular, the properties of the \textit{stability operator}, $L_{\Sigma}$, of a minimal hypersurface often determine important aspects of geometric variational problems. 

In \cite{white1994strong}, B. White proved that any minimal submanifold which is nondegenerate (in the sense that it admits no non-zero Jacobi fields, i.e. the equation $L_{\Sigma} f = 0$ has no nontrivial solutions), and finite Morse index is the solution of a strong min-max problem.

The Allen--Cahn energy for functions $u \in W^{1, 2}(M)$ is given by

\begin{equation}\label{ACenergy}
    E_{\eps}(u) \doteq \int_{M} \frac{\eps|\nabla u|^{2}}{2} + \frac{W(u)}{\eps},
\end{equation}

where $W(t) = \frac{(1 - t^{2})^{2}}{4}$ is the standard double--well potential. The Euler--Lagrange equation of \ref{ACenergy} is known as the \textit{Allen--Cahn equation} and is given by

\begin{equation}\label{ACeq}
    \eps^{2} \Delta_{g} u = W'(u).
\end{equation}

The solutions to \ref{ACeq} have a well known correspondence with minimal hypersurfaces, beginning with Modica--Mortola's work (see e.g., \cite{modica1985gradient}, \cite{mortola1977esempio}) on the $\Gamma$--convergence of the energies \ref{ACenergy} to the perimeter functional acting on sets of finite perimeter. A rich body of work has emerged in recent years utilizing min--max properties of the Allen--Cahn equation \ref{ACeq} to provide variational constructions of minimal hypersurface. Of particular note in this direction is the work of Guaraco \cite{guaraco2018min}, Guaraco--Gaspar \cite{gaspar2018allen}, Chodosh--Mantoulidis \cite{chodosh2020minimal}, and many others.

In the other direction, constructing solutions to the Allen--Cahn equation given a fixed minimal hypersurface, we have the following result of Pacard--Ritoré \cite{pacard2003constant}.

\begin{theorem}[Theorem 1.1 \cite{pacard2003constant}]\label{PacRit}
Assume that $(M, g)$ is an $n$-dimensional closed Riemannian manifold and $\Sigma^{n-1} \subseteq M^n$ is a $2$-sided, nondegenerate minimal hypersurface. Then there exists $\eps_0 > 0$ such that $\forall \eps < \eps_0$, there exist solutions, $u_{\eps}$, to equation \eqref{ACeq} such that $u_{\eps}$ converges to $+1$ (resp. -1) on compact subsets of $(M^+)^o$ (resp. $(M^{-})^o$) and 
\[
E_{\eps}(u_{\eps}) \xrightarrow{\eps \to 0} 2\sigma_{0} \mathcal{A}(\Sigma)
\]
where $\mathcal{A}(\Sigma)$ the $(n-1)$--dimensional area of $\Sigma$ and $\Sigma$ is a constant which depends on the choice of potential in \ref{ACenergy}.
\end{theorem}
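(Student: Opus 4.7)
The plan is to build $u_{\eps}$ by a Lyapunov--Schmidt reduction based on the one-dimensional heteroclinic profile $\mathbb{H}(t) = \tanh(t/\sqrt{2})$, which solves $\mathbb{H}'' = W'(\mathbb{H})$ with $\mathbb{H}(\pm \infty) = \pm 1$. Because $\Sigma$ is $2$-sided, in a tubular neighborhood $U_{\delta}$ I would set up Fermi coordinates $(y, z) \in \Sigma \times (-\delta, \delta)$ with $z$ the signed distance to $\Sigma$, so that the metric splits as $g = dz^{2} + g_{\Sigma}(y, z)$. The first ansatz $\bar{u}_{\eps}^{(0)}(y, z) = \mathbb{H}(z/\eps)$, cut off and glued to $\pm 1$ outside $U_{\delta}$, already gives the right energy: by coarea and $|\nabla d| = 1$ inside $U_{\delta}$, $E_{\eps}(\bar{u}_{\eps}^{(0)}) = 2 \sigma_{0} \mathcal{A}(\Sigma) + O(\eps)$, where $\sigma_{0} = \int_{\mathbb{R}} \mathbb{H}'(t)^{2}/2 \, dt$. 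Expanding $\Delta_{g} z = -H_{\Sigma}(y) + O(z)$ in Fermi coordinates and using minimality of $\Sigma$, the pointwise error $\eps^{2} \Delta_{g} \bar{u}_{\eps}^{(0)} - W'(\bar{u}_{\eps}^{(0)})$ is of order $\eps$, and can be improved by adding a normal displacement $\eps h(y)$ to the profile once the linear theory is available.

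Next I would set $u_{\eps} = \bar{u}_{\eps} + \phi$ and rewrite \eqref{ACeq} as $\mathcal{L}_{\eps} \phi = -R_{\eps} + N_{\eps}(\phi)$, where $\mathcal{L}_{\eps} = \eps^{2} \Delta_{g} - W''(\bar{u}_{\eps})$, $R_{\eps}$ is the ansatz error, and $N_{\eps}$ collects the nonlinear remainder. In the rescaled variable $t = z/\eps$, $\mathcal{L}_{\eps}$ is a small perturbation of the product operator $\partial_{t}^{2} - W''(\mathbb{H}(t)) + \eps^{2} L_{\Sigma}$ acting on $\Sigma \times \mathbb{R}$. The obstruction to inverting $\mathcal{L}_{\eps}$ uniformly in $\eps$ comes from the one-dimensional kernel $\mathrm{span}\{\mathbb{H}'\}$ of $-\partial_{t}^{2} + W''(\mathbb{H})$: the approximate kernel of $\mathcal{L}_{\eps}$ consists of slices $\mathbb{H}'(z/\eps) \psi(y)$ where, to leading order, $\psi$ satisfies the Jacobi equation $L_{\Sigma} \psi = 0$. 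The nondegeneracy hypothesis rules out nonzero such $\psi$, and this is exactly where the assumption enters.

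The main obstacle is making this invertibility quantitative. I would perform a Lyapunov--Schmidt decomposition on $\Sigma$: at each $y$, split $\phi(y, \cdot) = c(y) \mathbb{H}'(\cdot / \eps) + \phi^{\perp}(y, \cdot)$ in $L^{2}(\mathbb{R})$. On the $\phi^{\perp}$ piece, the spectral gap of $-\partial_{t}^{2} + W''(\mathbb{H})$ above $0$ yields invertibility with a controlled loss in $\eps$ using exponentially weighted norms adapted to the decay of $\mathbb{H}'$; on the $c(y)$ piece, projecting the equation against $\mathbb{H}'$ produces an equation of the form $L_{\Sigma} c = F$ on $\Sigma$, which is solvable by nondegeneracy and gives a bounded right-inverse on $\Sigma$. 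Coupling the two inversions and closing the estimates on the cross terms is the technical heart of the argument and yields $\|\mathcal{L}_{\eps}^{-1}\| \le C \eps^{-\alpha}$ for some small $\alpha$.

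With this in hand, the contraction mapping theorem applied to $\phi \mapsto \mathcal{L}_{\eps}^{-1}(-R_{\eps} + N_{\eps}(\phi))$ produces, for all $\eps < \eps_{0}$, a unique small fixed point $\phi_{\eps}$ whose weighted norm is a positive power of $\eps$. The resulting $u_{\eps} = \bar{u}_{\eps} + \phi_{\eps}$ solves \eqref{ACeq}; pointwise convergence to $\pm 1$ on compact subsets of $(M^{\pm})^{o}$ follows from the limits of $\mathbb{H}(z/\eps)$ together with the smallness of $\phi_{\eps}$, and Taylor expanding $E_{\eps}$ around $\bar{u}_{\eps}$ gives $E_{\eps}(u_{\eps}) = E_{\eps}(\bar{u}_{\eps}) + o(1) \to 2 \sigma_{0} \mathcal{A}(\Sigma)$.
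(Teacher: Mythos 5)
Your outline is essentially the original Pacard--Ritor\'e construction (infinite-dimensional Lyapunov--Schmidt reduction about the heteroclinic profile in Fermi coordinates), and as such it is a viable route to the statement --- but it is not the route taken here. In this paper Theorem \ref{PacRit} is recovered variationally (Corollary \ref{PacRitVar}): one defines the balanced energy $\bee(\Sigma')$ of a separating hypersurface via the Brezis--Oswald Dirichlet solutions on the two sides, computes its first and second variations and shows they agree with those of $2\sigma_{0}\,\mathcal{A}$ up to $O(\eps)$ errors (Theorem \ref{secVarEstTheorem}), transfers White's strong min-max property for nondegenerate minimal hypersurfaces to $\bee$ (Theorem \ref{min max}), and then runs a finite-dimensional mountain pass over the $k$ unstable directions of $L_{\Sigma}$, using a Palais--Smale condition (Proposition \ref{PSc}) and a quantitative deformation lemma (Lemmas \ref{DefLemma}, \ref{QuantDef}) whose contradiction hypothesis is excluded precisely by nondegeneracy; the critical point produced is a genuine smooth solution of \ref{ACeq} with energy equal to the min-max value, which tends to $2\sigma_{0}\mathcal{A}(\Sigma)$. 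The trade-offs are worth noting. Your approach yields sharper information (precise asymptotics of $u_{\eps}$, uniqueness of the perturbation, and no need for $\Sigma$ to be separating), but its entire difficulty is concentrated in the step you only assert: the uniform invertibility of $\mathcal{L}_{\eps}$ transverse to the approximate kernel and the coupling of the fibrewise inversion with the Jacobi operator $L_{\Sigma}$ (note the projected equation is really $\eps^{2}L_{\Sigma}c = F$, so the gain from nondegeneracy must be traded against a loss of $\eps^{-2}$, which is why the choice of weighted norms is delicate). The variational route avoids that linear theory altogether, at the cost of requiring $\Sigma$ separating (so that $\bee$ is defined) and of delivering the solution only as a min-max limit rather than as an explicit graph over the heteroclinic ansatz.
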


In \cite{marxkuo2023geometric} the author, together with J. Marx--Kuo, gave a variant of the Allen--Cahn energy \ref{ACenergy} suited to the study of the geometry of level sets of solutions. In this paper, we prove that our variant of the Allen--Cahn energy satisfies a strong min--max property analogous to that proved in White \cite{white1994strong}, provided that we restrict to surfaces which lie over the minimal hypersurface as small normal graphs. As a geometric application of our result, we give a new, purely variational proof of \ref{PacRit}.

\begin{remark}
    The first variational proof of \ref{PacRit} was given recently by Pigati--De Philippis \cite{dephilippis2022nondegenerate}. In their work, the candidate solutions to the Allen--Cahn equation are constructed using gradient flow methods. In the present work, our construction relies on the variational properties of the associated surface energy first defined in \cite{marxkuo2023geometric}, as well as a variant of the classical mountain pass theorem for solutions to elliptic semilinear equations.
\end{remark}

\subsection{Acknowledgements.}

The author is indebted to his advisor, Fernando Cod\'a Marques, for his patience and essential conversations regarding the completion of the present work. The author would also like to thank Jared Marx--Kuo for innumerable mathematical conversations. 

\section{Preliminaries and Main Results}\label{NotandPreSec}

\subsection{Notation and conventions.} \label{notandconsec}

\begin{itemize}
    \item $(M^{n}, g)$ always denotes an orientable, closed Riemannian manifold.
    \item Unless specified otherwise $\Sigma \subset M$ denotes an embedded, closed hypersurface, i.e. a submanifold of $M$ of codimension $1$.
    \item $B^{k}$ is the unit ball in $\bf{R}^{k}$.
    \item If $E \subset M$ is a Borel set, then $\mathcal{C}(E)$ denotes the space of sets of finite perimeter (Caccioppoli sets) $\Omega \subset E$.
    \item We say a sequence of Caccioppoli sets $\Omega_{n} \to \Omega$ if the measure of the symmetric difference tends to $0$, (i.e. $|\Omega_{n} \Delta \Omega| \to 0$.
    \item We denote by $C^{k, \alpha}_{\eps}(M)$ the Banach space of $C^{k, \alpha}$ functions on $M$ equipped with the $\eps$-weighted H\"older norms

    \begin{align}
        \|f\|_{C^{k, \alpha}_{\eps}} \doteq \sum_{|\beta| \leq k} \eps^{|\beta|}\|D^{\beta} f\|_{C^{0}(M)} + \eps^{k + \alpha}[D^{k}f]_{\alpha},
    \end{align} 

    where $\beta$ are multiindices and $[\cdot]_{\alpha}$ is the usual $\alpha$--H\"older seminorm.

    \item We denote by $W^{1, 2}_{\eps}(M)$ a Banach space we call the $\eps$-weighted Sobolev space, with norm

    \begin{align}
        \|f\|_{W^{1, 2}_{\eps}(M)}^{2} \doteq \eps\|f\|_{L^{2}(M)} + \eps^{3}\|\nabla f\|_{L^{2}(M)}.
    \end{align}

    Similarly we denote by $W^{1, 2}_{0, \eps}(M)$ those functions in $W^{1, 2}_{\eps}(M)$ which have trace $0$.
    \item The properties of the one-dimensional solution $\het(z) = \tanh(t/\sqrt{2})$ to the Allen--Cahn equation $\partial_{z}^{2}u - W'(u) = 0$. We define four functions as the unique solutions to auxiliary ODEs: $\omega, \rho, \tau, \kappa$,

    \begin{align} \label{AuxODEs}
        \omega'' - W''(\het)\omega &= \partial_{z}\het',\\
        \rho'' - W''(\het)\rho &= \omega',\\
        \tau'' - W''(\het)\tau &= z\het',\\
        \kappa'' - W''(\het)\kappa &= \het \omega,
    \end{align}

    all with the boundary conditions $f(0) = 0$, $\lim_{z \to \infty} f(z) = 0$.

    \item We define constants

    \begin{align*}
        \sigma_{0} \doteq \int_{0}^{\infty} \partial_{z}\het(z) dz = \frac{\sqrt{2}}{{3}},\\
        \sigma \doteq \het'(0) = \sqrt{2}^{-1}. 
    \end{align*}

    \item We write $\ell_{\eps, u} \doteq \Delta_{g} - W''(u)$ for the linearized Allen--Cahn operator at a solution $u$.

    \item For any function $f: \bf{R} \to \bf{R}$ we define $f_{\eps}(z) \doteq f(z/\eps)$.

    \item We define a cut-off of the heteroclinic, $\overline{\het}$, as follows. Let $\ell > 5$ $\chi_{\eps} : \bf{R} \to \bf{R}$ be a cutoff function.

    \begin{align*}
        \begin{cases}
            \chi_{\eps}(z) = 1, &|z| \in [0, -\ell \log(\eps)],\\
            \chi_{\eps}(z) = 0, &|z| \not\in [0, -2\ell \log(\eps)].
        \end{cases}
    \end{align*}

    Then

    \begin{align*}
        \overline{\het}(z) \doteq \chi_{\eps}(z)\het(z) + (1 - \chi_{\eps}(z)).
    \end{align*}

    Similarly, we can define the cut-off functions $\overline{\omega}$, $\overline{\rho}$, $\overline{\tau}$ and $\overline{\kappa}$, where these each cut-off to $0$ instead of to $1$, as the corresponding functions decay to $0$ at infinity. We note the cut-offs satisfy

    \begin{align*}
        \left(\frac{d^{2}}{dz^{2}} - W''(\overline{\het})\right)\overline{\het}' = E,\\
        \left(\frac{d^{2}}{dz^{2}} - W''(\overline{\het})\right)\overline{\omega}' = E + \overline{\het}',
    \end{align*}

    with $\|E\|_{W^{1, k}} \leq C\eps^{\ell}$.

    \item If $\Sigma$ is a sufficiently smooth minimal hypersurface, we denote by $H_{\Sigma}$ its mean curvature, and denote by $A_{\Sigma}$ its second fundamental form. 
    
    \item We denote by $\nu$ the outward pointing unit normal to $\Sigma$.

    \item We write by $N(\eta)$ a tubular neighborhood of $\Sigma$ of height $\eta$. Provided $\eta$ is sufficiently small, we may find a parametrization $\Sigma \times (-\eta, \eta) \to N(\eta)$, by

    \[
    (s, z) \mapsto \exp_{s}(z \nu(s)).
    \]

    We call this parametrization \textit{Fermi coordinates} in the tubular neighborhood $N(\eta)$.

    \item If $f \in C^{k, \alpha}(\Sigma)$ and $f$ is sufficently small, we denote by $\Gamma(f)$ the \textit{normal graph} of $f$ over $\Sigma$. That is, in Fermi coordinates,

    \[
    \Gamma(f) \doteq \{x \in M | x = \exp_{s}(f(s)\nu(s)\}.
    \]

    \item For the energies $E_{\eps}$, $\bee$ we denote by $\delta E_{\eps}, \delta^{2} E_{\eps}$, $\delta \bee,$ $\delta^{2} \bee$ their first and second variations.
\end{itemize}

In \cite{brezis1986remarks} Brezis-Oswald considered the solvability of the Dirichlet problem for the equation \ref{ACeq}.

\begin{theorem}[\cite{brezis1986remarks} Theorem $1$]\label{BOTheorem}
Suppose $\Omega \subset M$ is an open domain with smooth boundary. If $\varepsilon < \lambda_{1}(\Omega)^{-\frac{1}{2}}$, then

\begin{equation}\label{DirProb}
\begin{cases}
\Delta_{g} u = \frac{W'(u)}{\varepsilon^{2}} &\text{ in } \Omega,\\
u > 0 & \text{ in } \Omega,\\
u = 0 & \text{ on } \partial \Omega,
\end{cases}
\end{equation}
has a unique solution. Moreover, the solution minimizes the energy \ref{ACenergy} over all functions in $W^{1, 2}_{0}(\Omega)$.
\end{theorem}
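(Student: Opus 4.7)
My plan is to combine the direct method in the calculus of variations with a Picone/Brezis--Oswald type argument for uniqueness.

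\textbf{Existence via minimization.} First I would minimize $E_\varepsilon$ directly over $W^{1,2}_0(\Omega)$. Given a minimizing sequence $\{u_n\}$, I replace each term by $\min(|u_n|, 1)$; this does not increase the energy, because $W$ is even and decreasing on $[0,1]$ (so the potential term is never larger after truncation), and standard truncation arguments show that $\int |\nabla u|^2$ does not grow either. The modified sequence is uniformly bounded in $W^{1,2}_0(\Omega)$, so I extract a subsequence $u_n \rightharpoonup u$. Weak lower semicontinuity of the Dirichlet integral together with Fatou's lemma applied to the a.e.\ convergence of $W(u_n)$ shows that $u$ attains the infimum, with $0 \leq u \leq 1$.

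\textbf{Nontriviality and positivity.} To rule out $u \equiv 0$, I test $E_\varepsilon$ against $t \phi_1$, where $\phi_1 > 0$ is the first Dirichlet eigenfunction of $-\Delta_g$ on $\Omega$ normalized so that $\|\phi_1\|_{L^2} = 1$. Expanding $W(s) = \tfrac14 - \tfrac12 s^2 + \tfrac14 s^4$ yields
\begin{equation*}
E_\varepsilon(t \phi_1) - E_\varepsilon(0) = \frac{t^2}{2\varepsilon}\bigl(\varepsilon^2 \lambda_1(\Omega) - 1\bigr) + O(t^4),
\end{equation*}
which under the hypothesis $\varepsilon^2 \lambda_1(\Omega) < 1$ is strictly negative for small $t > 0$. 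Hence the minimizer $u$ is nontrivial. Elliptic regularity places $u \in C^{2,\alpha}(\overline{\Omega})$, and since $-\varepsilon^2 \Delta_g u = u(1 - u^2) \geq 0$ on the range $u \in [0,1]$, the function $u$ is superharmonic; vanishing at an interior point would force $u \equiv 0$ by the strong minimum principle, contradicting nontriviality. Therefore $u > 0$ throughout $\Omega$.

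\textbf{Uniqueness via Picone.} For uniqueness, suppose $u, v$ are two positive solutions. I would apply the Picone inequality
\begin{equation*}
\int_\Omega |\nabla v|^2 \geq \int_\Omega v^2 \, \frac{-\Delta_g u}{u},
\end{equation*}
which holds whenever $u > 0$ in $\Omega$ and $v \in W^{1,2}_0(\Omega)$. Substituting $-\Delta_g u / u = (1 - u^2)/\varepsilon^2$, together with $\int |\nabla v|^2 = \varepsilon^{-2} \int v^2 (1 - v^2)$ obtained by testing $v$'s equation against $v$, gives $\int_\Omega v^2(u^2 - v^2) \geq 0$. Swapping the roles of $u$ and $v$ and adding yields
\begin{equation*}
0 \leq \int_\Omega \bigl[v^2(u^2 - v^2) + u^2(v^2 - u^2)\bigr] = -\int_\Omega (u^2 - v^2)^2,
\end{equation*}
forcing $u \equiv v$. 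Combined with existence, the unique positive solution is therefore the global minimizer of $E_\varepsilon$ over $W^{1,2}_0(\Omega)$.

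\textbf{Main obstacle.} The most delicate step is justifying the Picone computation near $\partial\Omega$, where $v^2/u$ becomes singular as both functions vanish on the boundary. I would handle this by regularizing with $u + \delta, v + \delta$ and letting $\delta \downarrow 0$, appealing to the Hopf boundary lemma to ensure $u, v$ have positive, comparable normal derivatives along $\partial\Omega$ so that the test function $v^2/u$ extends to an admissible element of $W^{1,2}_0(\Omega)$ in the limit.
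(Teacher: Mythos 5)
Your proposal is correct, and since the paper states this result only as a citation of Brezis--Oswald without reproducing a proof, the relevant comparison is with their original argument: your route (direct minimization with truncation to $[0,1]$, nontriviality from testing with $t\phi_1$ under $\varepsilon^2\lambda_1(\Omega)<1$, positivity by the strong minimum principle, and uniqueness via the Picone/$f(u)/u$-monotonicity identity justified near $\partial\Omega$ by Hopf) is essentially the same as theirs. No gaps.
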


It follows from \ref{BOTheorem} that for a separating hypersurface $\Sigma \subset M$ along which $M$ splits into a disjoint union $M = M^{+} \sqcup_{\Sigma} M^{-}$, there exists an $\eps_{0}$ sufficiently small so that for all $\eps < \eps_{0}$ the problem \ref{DirProb} is uniquely solvable in both $M^{+}$ and $M^{-}$. This observation led the author, together with J. Marx--Kuo in \cite{marxkuo2023geometric} to define the \textit{balanced energy} of $\Sigma$.

\begin{definition}
    Let $\Sigma \subset M$ be a $2$--sided, separating hypersurface splitting $M$ into two disjoint open sets $M^{+}$ and $M^{-}$. Suppose $\eps_{0}$ is sufficiently small that the problem \ref{DirProb} admits a solution for both $M^{+}$ and $M^{-}$. Call these solutions $u^{+}_{\Sigma, \eps}$ and $u^{-}_{\Sigma, \eps}$, respectively. Then we define

    \begin{align}\label{BEdef}
        \bee(\Sigma) \doteq \int_{M^{+}} \frac{\eps|\nabla u^{+}_{\Sigma, \eps}|^{2}}{2} + \frac{W(u^{+}_{\Sigma, \eps})}{\eps} + \int_{M^{-}} \frac{\eps|\nabla u^{-}_{\Sigma, \eps}|^{2}}{2} + \frac{W(u^{-}_{\Sigma, \eps})}{\eps}
    \end{align}
\end{definition}

When $u^{+}_{\Sigma, \eps}$ and $u^{-}_{\Sigma, \eps}$ are simultaneously defined, we write

\[
u_{\Sigma, \eps} \doteq \begin{cases}
    u^{+}_{\Sigma, \eps} &\text{in } M^{+},\\
    u^{-}_{\Sigma, \eps} &\text{in } M^{-},\\
    0 & \text{on } \Sigma,
\end{cases}
\]

and we call $u_{\Sigma, \eps}$ the \textit{broken} $\eps$--\textit{phase transition associated to} $\Sigma$. If $\Sigma$ and $\eps$ are understood, we omit writing them. 

\subsection{Preliminary results}\label{prelimSec}

Suppose $\Sigma \subset M$ is a smooth, closed, embedded minimal hypersurface. Suppose further that $\Sigma$ is nondegenerate and has Morse index $k$. In \cite{white1994strong} B. White proved the following theorem.

\begin{theorem}[\cite{white1994strong}, Theorem $4$] \label{WhiteTheorem}
With $\Sigma$ as above, there exists a tubular neighborhood $U$ of $\Sigma$, and a smooth $k$--parameter family of hypersurfaces homologous to $\Sigma$, $\{\Sigma_{v}\}_{v \in B^{k}}$ so that the following hold.

\begin{enumerate}
    \item $\Sigma_{0} = \Sigma$.

    \item $\sup_{v \in B^{k}} \mathcal{A}(\Sigma_{v}) = \mathcal{A}(\Sigma)$.

    \item If $\tilde{\Sigma}_{v}, v \in B^{k}$ is any other smooth $k$--parameter family of smooth, closed, embedded hypersurfaces in $U$, homologous to $\Sigma$, and $\tilde{\Sigma}_{v} = \Sigma_{v}$ for all $v \in \partial B^{k}$, then

    \begin{equation}
        \sup_{v \in B^{k}} \mathcal{A}(\tilde{\Sigma}_{v}) \geq \mathcal{A}(\Sigma),
    \end{equation}

    with equality if and only if $\tilde{\Sigma}_{v} = \Sigma$ for some $v \in B^{k}$.
\end{enumerate}
\end{theorem}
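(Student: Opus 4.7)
I would construct $\{\Sigma_v\}$ by a finite-dimensional Lyapunov--Schmidt reduction of the area functional and then establish the min-max property via a Brouwer degree argument. Since $\Sigma$ is nondegenerate with Morse index $k$, the Jacobi operator $L_\Sigma$ has trivial kernel and exactly $k$ unstable eigenfunctions, so $C^{2,\alpha}(\Sigma) = V \oplus V^\perp$, where $V$ is the $k$-dimensional span of the unstable eigenfunctions and $V^\perp$ is the $L^2$-orthogonal complement; both subspaces are $L_\Sigma$-invariant. Fix a tubular neighborhood $U$ of $\Sigma$ small enough that every $f \in C^{2,\alpha}(\Sigma)$ of sufficiently small norm produces a smooth embedded normal graph $\Gamma(f) \subset U$, and consider
\[
\Phi(v, w) \doteq \mathcal{A}(\Gamma(v+w)), \qquad (v, w) \in V \times V^\perp,
\]
on a small product neighborhood of the origin, identifying $V$ with a ball $B^k \subset \mathbb{R}^k$ via a fixed eigenbasis.

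Because $\Sigma$ is minimal, $\nabla \Phi(0,0) = 0$, and $D^2\Phi(0,0)$ is the second variation form, which by the index and nondegeneracy hypotheses is strictly negative definite on $V$ and strictly positive definite on $V^\perp$. I would apply the implicit function theorem to the Euler--Lagrange equation $P_{V^\perp} H_{\Gamma(v+w)} = 0$, whose linearization in $w$ at the origin is $L_\Sigma|_{V^\perp} : V^\perp \to V^\perp$, an isomorphism. This produces a smooth map $w : B^k \to V^\perp$ with $w(0) = 0$, and for $B^k$ small enough, continuity of the Hessian upgrades positive definiteness of $\partial_w^2 \Phi$ uniformly so that $w(v)$ is the \emph{unique} local minimizer of $\Phi(v, \cdot)$ on a fixed $V^\perp$-ball. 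Setting $\Sigma_v \doteq \Gamma(v + w(v))$ gives property (1) immediately. Differentiating $\partial_w \Phi(v, w(v)) = 0$ at $v=0$ and using the mixed-partial identity $\partial_v \partial_w \Phi(0,0) = 0$ (which follows from $L_\Sigma$-orthogonality of $V$ and $V^\perp$) gives $w'(0) = 0$, so the reduced functional $g(v) \doteq \Phi(v, w(v))$ satisfies $D^2 g(0) = D^2\Phi(0,0)|_V$, which is negative definite. Hence $g$ has a strict local maximum at $v=0$ with $g(0) = \mathcal{A}(\Sigma)$, giving property (2).

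For property (3), any admissible competitor $\tilde \Sigma_v = \Gamma(\tilde f(v))$ (after shrinking $U$, if needed, so that every $\tilde \Sigma_v$ really is a normal graph) has boundary condition $\tilde f(v) = v + w(v)$ on $\partial B^k$. Composing with the $V$-projection yields $\pi \doteq P_V \circ \tilde f : B^k \to V \cong \mathbb{R}^k$, which equals the identity on $\partial B^k$ and therefore has Brouwer degree $1$. Consequently there exists $v_0 \in B^k$ with $\pi(v_0) = 0$, i.e.\ $\tilde f(v_0) \in V^\perp$, and the unique-minimizer property gives
\[
\mathcal{A}(\tilde \Sigma_{v_0}) = \Phi(0, \tilde f(v_0)) \geq \Phi(0, 0) = \mathcal{A}(\Sigma),
\]
with equality forcing $\tilde f(v_0) = w(0) = 0$, i.e.\ $\tilde \Sigma_{v_0} = \Sigma$.

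The main obstacle I anticipate is (i) upgrading the infinitesimal positive definiteness of $\partial_w^2 \Phi(0,0)$ to genuine coercivity on a uniform $V^\perp$-ball, so that $w(v)$ is the unique local minimizer rather than merely an isolated critical point --- this is what makes the equality analysis in (3) clean and is the step where the quantitative tubular radius must be chosen compatibly with the IFT radius. A secondary subtlety is (ii) certifying that any competitor family homologous to $\Sigma$ inside $U$ really can be realized as a single-valued normal graph $\Gamma(\tilde f(v))$ depending continuously on $v$; here one uses the Fermi-coordinate projection together with the homology hypothesis to exclude multi-valued graphs and stray components.
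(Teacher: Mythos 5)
First, note that this statement is not proved in the paper at all: it is quoted from White \cite{white1994strong} (Theorem 4), and the only proof-adjacent content here is the remark identifying the canonical family as $\Sigma_{v} = \Gamma(v_{1}u_{1} + \dots + v_{k}u_{k})$, i.e.\ graphs of linear combinations of the unstable eigenfunctions \emph{without} the Lyapunov--Schmidt correction $w(v)$ that you introduce. Your construction of the family, and your verification of (1) and (2) via the reduced functional $g(v) = \Phi(v, w(v))$ with $w'(0)=0$, are correct and standard; the degree argument locating $v_{0}$ with $P_{V}\tilde f(v_{0}) = 0$ is also the right skeleton.

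The genuine gap is in property (3). The competitors there are arbitrary smooth, closed, embedded hypersurfaces lying in the \emph{tubular neighborhood} $U$ and homologous to $\Sigma$ --- a $C^{0}$-type constraint --- whereas your argument needs each $\tilde{\Sigma}_{v}$ to be a single-valued normal graph $\Gamma(\tilde f(v))$ with $\tilde f(v)$ inside the small $C^{2,\alpha}$-ball on which your IFT and coercivity statements were established. Neither follows from the hypotheses. Homology inside $U$ does not force graphicality: a surface obtained from $\Sigma$ by attaching a thin embedded finger that rises and folds back over itself is embedded, contained in $U$, and homologous to $\Sigma$, but is not a normal graph; so your proposed resolution of obstacle (ii) via the homology hypothesis fails. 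More seriously, even for graphical competitors, membership in $U$ controls only $\|\tilde f(v)\|_{C^{0}}$ and says nothing about $\|\tilde f(v)\|_{C^{2,\alpha}}$, so at the point $v_{0}$ produced by the degree argument you cannot invoke the unique-minimizer property of $\Phi(0,\cdot)$, which you only have on a fixed small $C^{2,\alpha}$-ball in $V^{\perp}$. This discrepancy between the $C^{0}$ neighborhood in the statement and the $C^{2,\alpha}$ neighborhood in the Lyapunov--Schmidt reduction is exactly what makes White's theorem ``strong,'' and bridging it (via his abstract Banach-space min-max theorem together with a geometric replacement/deformation reducing arbitrary competitors in $U$ to controlled graphs without increasing area) is the main content of \cite{white1994strong}. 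As written, your argument proves only the weaker min-max property over families of small $C^{2,\alpha}$ graphs --- which, incidentally, is the form actually invoked in the proof of Theorem \ref{min max}, where competitors are already restricted to an admissible neighborhood of $0$ in $C^{k,\alpha}(\Sigma)$.
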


\begin{remark}
    The $k$-parameter family in Theorem \ref{WhiteTheorem} is given as follows. Let $u_{1}, \dots, u_{k}$ be the first $k$-eigenfunctions of the stability operator for $\Sigma$. For $v = (v_{1}, \dots, v_{k}) \in B^{k}_{\rho},$ $\Sigma_{v} = \Gamma(v_{1}u_{1} + \dots v_{k}u_{k})$. Here $\rho$ is taken sufficiently small that $v_{1}u_{1} + \dots v_{k}u_{k}$ lie in $U$.
\end{remark}

We call a neighborhood $V$ of $0$ in $C^{k,\alpha}(\Sigma)$ \textit{admissible} if, for any $f \in V$, $\Gamma(f) \subset U$, where $U$ is the tubular neighborhood given in Theorem \ref{WhiteTheorem}.

In \cite{marxkuo2023dirichlettoneumann}, J. Marx--Kuo computed an expansion of the solutions to \ref{DirProb} in $\eps$ to derive an asymptotic formula for solutions to the Dirichlet problem which makes clear the geometric dependency of solutions on the boundary of the domain. 

\begin{theorem}[\cite{marxkuo2023dirichlettoneumann}, Theorem 1.5]
    Let $u: \Omega \to \bf{R}$ be the solution to \ref{DirProb} afforded by \ref{BOTheorem}, where $\Sigma = \partial \Omega$. Then

    \begin{align}\label{uExp}
        u(s, z) &= \overline{\het}_{\eps}(z) + \eps H_{\Sigma}(s)\overline{\omega}_{\eps}(z)\\
        & \quad \quad + \eps^{2} \bigg((\textnormal{Ric}(\nu(s), \nu(s)) + |A_{\Sigma}(s)|^{2}) \overline{\tau}_{\eps}(z) \nonumber \\
        & \quad \quad + H_{\Sigma}^{2}(s)\bigg( \overline{\rho}_{\eps}(z) + \frac{1}{2}\overline{\kappa}_{\eps}(z) \bigg) \bigg) + \phi_{0, \eps}, \nonumber
    \end{align}

    where the remainder term $\phi_{0, \eps}$ satisfies the estimate,

    \begin{align}\label{rem1est}
    \|\phi_{0, \eps}\|_{C^{2, \alpha}_{\eps}(\Omega)} \leq C(\Sigma)\eps^{3}.
    \end{align}
\end{theorem}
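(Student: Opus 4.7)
The plan is a three-step construction: a formal asymptotic expansion in $\eps$ in Fermi coordinates, a cutoff producing a globally defined approximate solution, and a contraction argument correcting the residual error to produce the true solution.

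First, in Fermi coordinates $(s,z)$ in a tubular neighborhood of $\Sigma = \partial \Omega$, the Laplace--Beltrami operator decomposes as
\[
\Delta_g = \partial_z^2 - H(s,z)\,\partial_z + \Delta_{\Sigma_z},
\]
where $\Sigma_z$ is the parallel hypersurface at signed distance $z$ and its mean curvature satisfies
\[
H(s,z) = H_\Sigma(s) - z\bigl(\Ric(\nu(s),\nu(s)) + |A_\Sigma(s)|^2\bigr) + O(z^2).
\]
I make the ansatz $u_\eps(s,z) = \het(z/\eps) + \eps u_1(s,z/\eps) + \eps^2 u_2(s,z/\eps) + \cdots$, substitute into $\eps^2 \Delta_g u = W'(u)$, Taylor-expand $W'$ around $\het$, and rescale $\zeta = z/\eps$. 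The $O(\eps^0)$ equation recovers $\het'' = W'(\het)$; the $O(\eps)$ equation $(\partial_\zeta^2 - W''(\het))u_1 = H_\Sigma(s)\het'(\zeta)$ is solved by $u_1 = H_\Sigma(s)\omega(\zeta)$; and at $O(\eps^2)$ the $\zeta\het'$-forcing produced by the linear-in-$z$ term of $H(s,z)$ gives the $\overline{\tau}$-contribution, while the quadratic Taylor remainder of $W'$ and the drift correction coming from $u_1$ give the $H_\Sigma^2(\overline{\rho} + \tfrac12\overline{\kappa})$-contribution, matching the auxiliary ODEs \eqref{AuxODEs}.

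Next, I replace each profile by its $\chi_\eps$-cutoff $\overline{\het},\overline{\omega},\overline{\rho},\overline{\tau},\overline{\kappa}$ and extend $\bar u_\eps$ constantly away from the tubular neighborhood. Since $\het(0) = \omega(0) = \rho(0) = \tau(0) = \kappa(0) = 0$, the approximate solution has Dirichlet data on $\Sigma$, and the cutoff residuals $\|E\|_{W^{1,k}} \leq C\eps^\ell$ combined with the $O(\eps^3)$ truncation error yield
\[
\|\eps^2 \Delta_g \bar u_\eps - W'(\bar u_\eps)\|_{C^{0,\alpha}_\eps(\Omega)} \leq C(\Sigma)\,\eps^3.
\]
Writing $u_\eps = \bar u_\eps + \phi_{0,\eps}$ with $\phi_{0,\eps} \in W^{1,2}_{0,\eps}(\Omega)$, the remainder satisfies
\[
(\eps^2 \Delta_g - W''(\bar u_\eps))\phi_{0,\eps} = -\bigl(\eps^2 \Delta_g \bar u_\eps - W'(\bar u_\eps)\bigr) + Q(\phi_{0,\eps}),
\]
with $Q(\phi) = W'(\bar u_\eps + \phi) - W'(\bar u_\eps) - W''(\bar u_\eps)\phi$ quadratic in $\phi$, and a contraction mapping in the ball of radius $C\eps^3$ in $C^{2,\alpha}_\eps(\Omega)$ then produces $\phi_{0,\eps}$ satisfying \eqref{rem1est}.

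The hardest step will be the uniform invertibility of $\eps^2 \Delta_g - W''(\bar u_\eps)$ on $W^{1,2}_{0,\eps}(\Omega)$ with $\eps$-weighted Hölder bounds on the inverse. The one-dimensional model $\partial_\zeta^2 - W''(\het)$ on the half-line with Dirichlet data at $\zeta = 0$ has no kernel since its only tempered zero $\het'$ satisfies $\het'(0) = \sigma \neq 0$, and Brezis--Oswald uniqueness rules out a kernel of the full operator for $\eps$ small. A blow-up analysis at points of $\Sigma$ in Fermi coordinates yields a uniform positive spectral gap for the rescaled operator, and Schauder theory in the $\eps$-weighted norms upgrades this $L^2$-gap to the Hölder estimate needed to close the contraction.
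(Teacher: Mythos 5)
This statement is imported from \cite{marxkuo2023dirichlettoneumann} and the paper gives no proof of it, so there is nothing internal to compare against; your proposal is a faithful reconstruction of the standard argument used in that reference (formal expansion in Fermi coordinates generating exactly the auxiliary ODEs \ref{AuxODEs}, cutoff to a global approximate solution with Dirichlet data, and a fixed-point correction in $C^{2,\alpha}_{\eps}$). The one sentence I would flag is the claim that Brezis--Oswald uniqueness ``rules out a kernel'' of the linearized operator: uniqueness of the positive solution to the nonlinear problem does not by itself give invertibility of the linearization (it only gives nonnegativity of the second variation, since the solution is the energy minimizer), and the actual mechanism is the blow-up/spectral-gap argument you state next, resting on $\het'(0)=\sigma\neq 0$ for the half-line model; with that as the load-bearing step the outline is sound.
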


In \cite{marxkuo2023geometric}, it was shown that the solutions $u_{\Sigma}$ depend in a smooth way on the hypersurface $\Sigma$. This allows one to take the first and second variations of the energies $\bee$. We collect the following results.

\begin{theorem}[\cite{marxkuo2023geometric}, Corollary 2.1]
If $\Sigma_{t}$ is a smooth $1$-parameter family of closed, $2$--sided, separating hypersurfaces for which $\bee$ is defined, and the variational vector field $X$ is given by $f \nu$ for some $f \in C^{k, \alpha}(\Sigma_{0})$, then

\begin{align}\label{firstVarest0}
    \frac{d}{dt}\bigg\vert_{t = 0} \bee(\Sigma_{t}) = \frac{\eps}{2} \int_{\Sigma_{0}} f((u^{+}_{\nu})^{2} - (u^{-}_{\nu})^{2}) = - 2\sigma_{0}\int_{\Sigma_{0}} H_{\Sigma_{0}}f d\mathcal{H}^{n-1} + E(f),
\end{align}

where $E(f) \leq C(\Sigma_{0})\eps \|f\|_{C^{k, \alpha}}(\Sigma)$.
\end{theorem}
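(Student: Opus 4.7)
The plan is to establish both equalities in sequence by combining a Hadamard-type domain-variation calculation with the expansion \eqref{uExp}.

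For the first equality, I would differentiate $E_\eps^\pm(\Sigma_t) \doteq \int_{M^\pm_t}\bigl[\tfrac{\eps|\nabla u^\pm_t|^2}{2} + \tfrac{W(u^\pm_t)}{\eps}\bigr]$ at $t=0$ on each side separately. This produces (i) a bulk contribution $\int_{M^\pm}\bigl[\eps\nabla u^\pm \cdot \nabla \dot u^\pm + \tfrac{W'(u^\pm)}{\eps}\dot u^\pm\bigr]$, which by integration by parts and \eqref{ACeq} reduces to the boundary integral $\int_\Sigma \eps \,\partial_{\nu_{M^\pm}} u^\pm \cdot \dot u^\pm$; and (ii) a Lagrangian boundary piece $\int_\Sigma \bigl[\tfrac{\eps|\nabla u^\pm|^2}{2} + \tfrac{W(u^\pm)}{\eps}\bigr] V^\pm$, where $V^\pm$ is the outward normal velocity of $\partial M^\pm$. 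Differentiating the Dirichlet condition $u^\pm_t \circ \phi_t \equiv 0$ along the flow gives $\dot u^\pm = -V^\pm \partial_{\nu_{M^\pm}}u^\pm$ on $\Sigma$; using $u^\pm|_\Sigma = 0$ (so $W(u^\pm) = 1/4$ there) together with the vanishing of tangential derivatives on $\Sigma$ (so $|\nabla u^\pm|^2 = (u^\pm_\nu)^2$) I obtain
\begin{equation*}
\frac{dE_\eps^\pm}{dt}\bigg|_0 = \int_\Sigma \bigg[\frac{1}{4\eps} - \frac{\eps}{2}(u^\pm_\nu)^2\bigg] V^\pm .
\end{equation*}
Fixing $\nu$ to point into $M^+$ one has $V^+ = -f$ and $V^- = +f$, so the $1/(4\eps)$ pieces cancel after summing and the first stated equality drops out.

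For the second equality I would substitute \eqref{uExp} at $z=0$. On the $M^+$ side, differentiating in $z$ and using that the cutoffs agree with the un-cutoff functions near $z=0$ for small $\eps$, together with $\|\partial_z \phi_{0,\eps}\|_{C^0} = O(\eps^2)$ from \eqref{rem1est}, gives $u^+_\nu|_\Sigma = \sigma/\eps + H_\Sigma\,\omega'(0) + O(\eps)$. The $M^-$-side expansion is run with inward normal $-\nu$, under which the mean curvature of $\Sigma$ flips sign; converting back to $\nu$-derivatives produces $u^-_\nu|_\Sigma = -\sigma/\eps + H_\Sigma\,\omega'(0) + O(\eps)$. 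The divergent $\sigma^2/\eps^2$ pieces cancel in the difference of squares, leaving
\begin{equation*}
(u^+_\nu)^2 - (u^-_\nu)^2 = \frac{4\sigma\,\omega'(0)}{\eps}\, H_\Sigma + O(1).
\end{equation*}
Multiplying by $\eps f/2$ and invoking the identity $\sigma\,\omega'(0) = -\sigma_0$ — a direct consequence of the $\omega$-ODE in \eqref{AuxODEs}, obtained by pairing it with $\het'$ on $[0,\infty)$, integrating by parts twice, and using $\omega(0)=\omega(\infty)=0$, $\het'(0)=\sigma$, and $\int_0^\infty(\het')^2\,dz = \sigma_0$ — yields the leading term $-2\sigma_0\int_\Sigma H_\Sigma f\,d\mathcal{H}^{n-1}$. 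The remainder $E(f)$ collects the $O(\eps)$-corrections from the higher-order terms in \eqref{uExp} and from \eqref{rem1est}, all controlled by $C(\Sigma_0)\eps\,\|f\|_{C^{k,\alpha}(\Sigma)}$.

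The main obstacle is the sign- and orientation-bookkeeping: the $\eps^{-2}$ divergent pieces must cancel exactly between the two sides, and the surviving $\eps^{-1}$ piece must combine through the ODE-based identity $\sigma\,\omega'(0) = -\sigma_0$ to produce the geometric functional with the correct sign. A secondary subtlety is ensuring that the $M^-$-side asymptotic expansion is oriented consistently with the $M^+$-side one; the cleanest route is to invoke the expansion of \cite{marxkuo2023dirichlettoneumann} in $M^-$ with inward-pointing normal $-\nu$, which flips $H_\Sigma$ in the $\omega$-correction while preserving its value at the boundary and so yields the claimed combination automatically.
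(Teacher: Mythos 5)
Your proposal is correct and follows essentially the route the paper itself takes: the first equality is the same Hadamard domain-variation computation carried out in Proposition \ref{FirSecVar} (your sign agrees with the quoted statement once one fixes, as you do, that $\nu$ points into $M^{+}$), and the second equality follows from substituting \eqref{uExp} exactly as you describe, with your identity $\sigma\,\omega'(0)=-\sigma_{0}$ consistent with the value $u^{+}_{\nu}(s,0)=\tfrac{1}{\eps\sqrt{2}}-\tfrac{2}{3}H_{\Sigma}+O(\eps)$ recorded in the proof of Theorem \ref{secVarEstTheorem}. The only caveat is that your integration-by-parts derivation of that identity requires reading the right-hand side of the $\omega$-equation in \eqref{AuxODEs} as $\het'$ (which the value $\omega'(0)=-2/3$ confirms is the intended equation) rather than the literal $\partial_{z}\het'=\het''$, which would give $\omega'(0)=\sigma/2$ instead.
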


Since $u_{t} \doteq u_{\Sigma_{t}, \eps}$ is a well defined family of functions in $W^{1, 2}(M)$ smooth in $t$, we denote by $\dot{u}_{t}$ its time derivative, which can be considered as a function in $W^{1,2}(M)$ which, following \cite{marxkuo2023geometric}, satisfies the boundary value problem

\begin{align}
    \begin{cases}
        \ell_{\eps, u_{t}} \dot{u}_{t} = 0 & \text{in } M^{+}_{t},\\
        \dot{u}_{t} = - f \frac{\partial u_{t}}{\partial \nu_{t}} & \text{on } \Sigma_{t}.
    \end{cases}
\end{align}

We have the following decomposition for the function $\dot{u}_{t}$.

\begin{proposition}{\cite{marxkuo2023geometric}, Lemma 9.1}
In fermi coordinates adapted to $\Sigma_{t}$, we may write

\begin{equation}\label{dotuExp}
    \dot{u}_{t}^{\pm}(s, z) = -f(s) \frac{\partial u_{t}^{\pm}}{\partial \nu_{t}} \dot{\overline{\het}}_{\eps}(z) + \phi_{1, \eps}^{\pm},
\end{equation}

where $\|\phi_{1, \eps}^{\pm}\|_{W^{1, 2}_{0, \eps}(M^{\pm})} \leq C(\Sigma)\eps\|f\|_{W^{1, 2}(\Sigma)}$.
\end{proposition}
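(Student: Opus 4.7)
The plan is to decompose $\dot{u}_t^{\pm} = v_0^{\pm} + \phi_{1,\eps}^{\pm}$ where the separable boundary-layer ansatz
\begin{equation*}
v_0^{\pm}(s,z) := -f(s)\,\tfrac{\partial u_t^{\pm}}{\partial\nu_t}(s)\,\dot{\overline{\het}}_\eps(z)
\end{equation*}
is normalized so that $\dot{\overline{\het}}_\eps(0) = 1$, forcing $v_0^{\pm} = -f\tfrac{\partial u_t^{\pm}}{\partial\nu_t}$ on $\Sigma_t$. The remainder $\phi_{1,\eps}^{\pm}$ then solves the inhomogeneous Dirichlet problem
\begin{equation*}
\ell_{\eps,u_t}\phi_{1,\eps}^{\pm} = -\ell_{\eps,u_t}v_0^{\pm} =: \mathcal{R}^{\pm} \ \text{ in } M_t^{\pm}, \qquad \phi_{1,\eps}^{\pm} = 0 \ \text{ on } \Sigma_t,
\end{equation*}
so the task reduces to (i) bounding $\mathcal{R}^{\pm}$ in the dual of $W^{1,2}_{0,\eps}(M_t^{\pm})$, and (ii) inverting $\ell_{\eps,u_t}$ uniformly in $\eps$.

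For (i), I would compute $\mathcal{R}^{\pm}$ in Fermi coordinates using the expansion $\Delta_g = \partial_z^2 + H(s,z)\partial_z + \Delta_{\Sigma_t^z}$. The dominant normal operator $\partial_z^2 - W''(\overline{\het}_\eps)$ applied to $\dot{\overline{\het}}_\eps$ is controlled by the cut-off ODE $\overline{\het}''' - W''(\overline{\het})\overline{\het}' = E$ from the notation section, producing an $O(\eps^\ell)$ error. Replacing $W''(\overline{\het}_\eps)$ by $W''(u_t)$ costs an extra $O(\eps)$ via the expansion \eqref{uExp}; the drift term $H\partial_z$ acting on the profile contributes an $O(1)$ factor damped by the $\eps$-layer mass of $\dot{\overline{\het}}_\eps$; and the tangential Laplacian $\Delta_{\Sigma_t^z}$ applied to the slow factor $-f(s)\tfrac{\partial u_t^{\pm}}{\partial\nu_t}(s)$ contributes a term controlled by $\|f\|_{W^{1,2}(\Sigma)}$ after invoking elliptic regularity for $u_t$ on $\Sigma_t$. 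Tracking the $\eps$-scalings carefully and pairing $\mathcal{R}^\pm$ against test functions in $W^{1,2}_{0,\eps}(M_t^\pm)$ yields $\|\mathcal{R}^{\pm}\|_{(W^{1,2}_{0,\eps})^*} \leq C(\Sigma)\eps\|f\|_{W^{1,2}(\Sigma)}$.

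For (ii), I would establish uniform invertibility of $\ell_{\eps,u_t}$ on $W^{1,2}_{0,\eps}(M_t^{\pm})$. The key structural observation is that the Dirichlet condition on $\Sigma_t$ suppresses the would-be kernel mode $\overline{\het}'_\eps$ (since $\overline{\het}'_\eps(0) = \sigma/\eps \neq 0$), and combined with the strict interior positivity $u_t > 0$ from Theorem \ref{BOTheorem} this yields a spectral gap for the Dirichlet linearization that is uniform in $\eps$. Inverting against $\mathcal{R}^{\pm}$ delivers the desired bound on $\|\phi_{1,\eps}^{\pm}\|_{W^{1,2}_{0,\eps}(M^{\pm})}$. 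The main obstacle I expect is precisely this uniform Dirichlet coercivity: because $W^{1,2}_\eps$ weights values and gradients by different powers of $\eps$ and the Jacobi-like profile $\overline{\het}'_\eps$ concentrates in the boundary layer, one has to combine a Hardy-type estimate near $\Sigma_t$ (extracting the $\eps$-gain from the vanishing boundary trace) with the positivity of $W''(\overline{\het}_\eps)$ away from the layer, rather than simply cite a standard closed-manifold Fredholm result.
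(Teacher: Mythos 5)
Your decomposition is exactly the one intended here: the statement is quoted from Lemma 9.1 of \cite{marxkuo2023geometric} and is not reproved in this paper, but in the proof of Theorem \ref{secVarEstTheorem} the paper uses precisely your reduction, writing $\ell_{\eps,u^{\pm}}(\phi^{\pm}) = -\ell_{\eps,u^{\pm}}(h\,\overline{\het}'_{\eps})$ with $h = -f(s)\,u_{\nu}^{\pm}(s,0)$ and then expanding the residual in Fermi coordinates, so your steps (i) and (ii) are the right skeleton.

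Two of your justifications, however, would not survive being written out. First, the tangential term $\eps^{2}\Delta_{\Sigma_t^{z}}\bigl(f(s)\,\partial_{\nu}u_t^{\pm}(s)\bigr)$ involves two derivatives of $f$, and elliptic regularity for $u_t$ says nothing about derivatives of $f$; the only way to land on $\|f\|_{W^{1,2}(\Sigma)}$ rather than $\|f\|_{W^{2,2}(\Sigma)}$ is to keep the residual in the dual norm throughout and integrate by parts once onto the test function, exploiting that the $W^{1,2}_{0,\eps}$ norm weights gradients by $\eps^{3}$ --- you mention the dual pairing only in passing, but it must be the backbone of step (i). Second, ``strict interior positivity $u_t>0$'' does not by itself produce a uniform spectral gap: via the Brezis--Oswald minimality in Theorem \ref{BOTheorem} it yields only nonnegativity of the Dirichlet second variation. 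The uniform-in-$\eps$ coercivity instead comes from the one-dimensional fact that $-\partial_z^{2}+W''(\het)$ on the half-line with a Dirichlet condition at $z=0$ has strictly positive bottom spectrum (the essential spectrum starts at $W''(\pm 1)=2$, and the zero mode $\het'$ is excluded precisely because $\het'(0)=\sigma\neq 0$), glued to the positivity of $W''(u_t)$ away from the layer; your final sentence about the Hardy-type estimate near $\Sigma_t$ is the correct mechanism and should replace the positivity argument rather than supplement it.
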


For the second variation, the following was shown.

\begin{theorem}[\cite{marxkuo2023geometric}, Theorem 2.6]
    Let $\Sigma$ be a critical point for $\bee$. Then if $\Sigma_{t}$ is any normal variation with variational vector field given by $f \nu$ as above, then

    \begin{align}\label{secVarEstCrit}
        \frac{d^{2}}{dt^{2}}\bigg\vert_{t = 0} \bee(\Sigma_{t}) &= \eps\int_{\Sigma} f u_{\nu}(\dot{u}^{+}_{\nu} - \dot{u}^{-}_{\nu}),\\
        &=2\sigma_{0}\int |\nabla f|^{2} - (\textnormal{Ric}(\nu, \nu) + |A_{\Sigma}|^{2})f^{2}d\mathcal{H}^{n -1} + \tilde{E}(f), \nonumber
    \end{align}

    where $E(f) \leq C(\Sigma) \eps^{\frac{1}{2}} \|f\|_{H^{1}(\Sigma)}.$ 
    
\end{theorem}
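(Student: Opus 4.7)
The plan is to first establish the identity $\frac{d^2}{dt^2}\vert_{t=0}\bee(\Sigma_t) = \eps\int_\Sigma f u_\nu(\dot u^+_\nu - \dot u^-_\nu)$ by differentiating the first variation \eqref{firstVarest0} at a critical point, and then to evaluate the right-hand side by reducing to a volume integral via Green's identity and substituting the expansions \eqref{uExp} and \eqref{dotuExp}.

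For the first equality, I extend $\Sigma_t$ smoothly past $t=0$ and differentiate the right-hand side of \eqref{firstVarest0} in $t$. The derivative splits as a sum of three contributions: one from differentiating the normal speed $f_t$, one from differentiating the integrand $(u^+_{t,\nu_t})^2 - (u^-_{t,\nu_t})^2$, and one from the variation of the hypersurface measure (a factor proportional to $H_\Sigma f$). The critical point hypothesis applied to \eqref{firstVarest0} at arbitrary test functions forces $(u^+_\nu)^2 \equiv (u^-_\nu)^2$ pointwise on $\Sigma$, so the first and third contributions vanish identically. Moreover, the Dirichlet condition $u^\pm \equiv 0$ on each $\Sigma_t$ gives $\nabla_\Sigma u^\pm = 0$ on $\Sigma$, so the commutator between $\partial_t$ and $\partial_{\nu_t}$ produces no extra contribution. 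Writing $u_\nu$ for the common boundary value, the surviving term evaluates to $\frac{\eps}{2}\int_\Sigma f \cdot 2u_\nu(\dot u^+_\nu - \dot u^-_\nu)$, yielding the first equality.

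For the geometric formula, I pair the equation $\ell_{\eps, u}\dot u^\pm = 0$ with $\dot u^\pm$ and integrate by parts on $M^\pm$; together with the boundary condition $\dot u^\pm = -f u_\nu$, this converts the boundary quantity $\eps\int_\Sigma f u_\nu(\dot u^+_\nu - \dot u^-_\nu)$ into a volume integral of a suitable multiple of $|\nabla \dot u|^2 + W''(u)(\dot u)^2$ over $M$. Substituting the leading-order piece $\dot u^\pm \approx -f(s)\overline{\het}'_\eps(z)$ from \eqref{dotuExp} in Fermi coordinates and performing the $z$-integration via Fubini, the $f^2$-proportional terms produce $1$D integrals whose dominant $O(\eps^{-3})$ contributions $\int_\RR (\overline{\het}''_\eps)^2$ and $\int_\RR W''(\overline{\het}_\eps)(\overline{\het}'_\eps)^2$ cancel exactly thanks to the heteroclinic identity $\eps^2(\overline\het')'' = W''(\overline\het)\overline\het'$ (up to $O(\eps^\ell)$ cut-off errors). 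The surviving tangential-gradient contribution $|\nabla_\Sigma f|^2\int_\RR (\overline\het'_\eps)^2$ reduces, via $\int_\RR (\het')^2 = 2\sigma_0$, to the Dirichlet term $2\sigma_0\int_\Sigma |\nabla f|^2$. The curvature piece $-2\sigma_0(\Ric(\nu,\nu)+|A_\Sigma|^2)f^2$ then arises from the subleading terms of \eqref{uExp} — the $\eps H_\Sigma\overline\omega_\eps$ and $\eps^2$ curvature contributions — interacting with the remainder $\phi_{1,\eps}^\pm$ and the Jacobian corrections in Fermi coordinates, and is evaluated using the auxiliary ODEs \eqref{AuxODEs}.

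The main technical challenge is the delicate $O(\eps^{-3})$ cancellation in the volume integral and the precise extraction of the $O(1)$ remainder representing the Jacobi form; every correction in \eqref{uExp} and \eqref{dotuExp} must be tracked at the correct order, and the cut-off bound $\|E\|_{W^{1,k}} \leq C\eps^\ell$ guarantees that replacing $\het$ by $\overline\het$ preserves the cancellation. The error $\tilde E(f)$ is controlled by combining \eqref{rem1est} for $\phi_{0,\eps}$, the $W^{1,2}_{0,\eps}$ bound for $\phi_{1,\eps}^\pm$, and trace estimates on $\Sigma$, with the $\eps^{1/2}$ factor emerging from the $\eps$-weighting of the norms after restricting to the codimension-one boundary.
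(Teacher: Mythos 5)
Your overall strategy coincides with the one the paper relies on: the statement is quoted from \cite{marxkuo2023geometric}, and the closest argument in the present text is the proof of Theorem \ref{secVarEstTheorem}, which proceeds exactly as you propose --- specialize the general second variation formula (Corollary \ref{12Varf}) and then evaluate the boundary pairing by inserting the Fermi-coordinate expansions \eqref{uExp} and \eqref{dotuExp}.

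There is, however, one concrete gap in your derivation of the first identity. When you differentiate $-\tfrac{\eps}{2}\int_{\Sigma_t} f\bigl((u^+_\nu)^2-(u^-_\nu)^2\bigr)$ along the flow, the material derivative of the integrand contains, besides $2u_\nu\dot u_\nu$, the transport term $f\,\partial_\nu\bigl[(u^+_\nu)^2-(u^-_\nu)^2\bigr]=2f\bigl(u^+_\nu u^+_{\nu\nu}-u^-_\nu u^-_{\nu\nu}\bigr)$ --- the middle term of \eqref{secvarf}. Criticality only gives $(u^+_\nu)^2=(u^-_\nu)^2$ \emph{on} $\Sigma$, so the normal derivative of this difference does not vanish for free, and your appeal to $\nabla_\Sigma u^\pm=0$ addresses the variation of $\nu_t$, not this term. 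It does vanish, but one must invoke the equation: at $z=0$ one has $\eps^2(u_{zz}-H_\Sigma u_z)=W'(0)=0$ (the tangential Laplacian of $u$ vanishes since $u\equiv 0$ on $\Sigma$), hence $u^\pm_{\nu\nu}=H_\Sigma u^\pm_\nu$ and $u^+_\nu u^+_{\nu\nu}-u^-_\nu u^-_{\nu\nu}=H_\Sigma\bigl((u^+_\nu)^2-(u^-_\nu)^2\bigr)=0$ at a critical point. Without this step the first equality acquires a spurious $f^2$ term. For the second identity, your integration by parts against $\ell_{\eps,u}\dot u^\pm=0$ and the cancellation of the leading $z$-integrals via the linearized heteroclinic equation are the correct mechanism, but the step producing $-(\Ric(\nu,\nu)+|A_\Sigma|^2)f^2$ is asserted rather than carried out, and its attribution mostly to the $\eps^2$ corrections in \eqref{uExp} is imprecise: in the paper's treatment this term arises from $\Delta_z$, $H_z$ and $W''(u)-W''(\overline{\het}_\eps)$ acting on the leading profile, with the detailed bookkeeping deferred to \cite{marxkuo2023geometric}, Section 11.4. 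Your outline would need that same level of detail to close.
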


\subsection{Main results.} \label{MainSec}

We now state the main results of this paper.

\begin{theorem}\label{min max}
    Let $\Sigma \subset (M, g)$ be a smooth, closed, embedded, $2$-sided, separating minimal hypersurface with Morse index equal to $k$ and nullity equal to $0$. 
    
     Let $\delta > 0$. There is an $\eps_{0} > 0$ so that for all $\eps < \eps_{0}$ the following holds. Write $\{\Sigma_{v} \vert v \in B^{k}\}$ for the canonical family given by \ref{WhiteTheorem} and $U$ an admissible neighborhood in $C^{k, \alpha}(\Sigma)$, and for $r > 0$ write $\mathcal{P}_{r} \doteq \{\gamma \in C^{0}(B^{k}, U) \vert \gamma \vert_{\partial B^{k}}(v) = \Sigma_{rv}\}$.

    \[
    \inf_{\gamma \in \mathcal{P}_{r}} \sup_{v \in B^{k}} \bee(\gamma(v)) \geq \bee(\Sigma) - \delta.
    \]
\end{theorem}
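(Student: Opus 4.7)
The plan is to reduce the theorem to White's strong min--max theorem (Theorem~\ref{WhiteTheorem}) for the area functional, by exploiting the fact that on admissible normal graphs $\bee$ is uniformly $C^{0}$-close to $2\sigma_{0} \mathcal{A}$. The heuristic is that the min--max value for $\bee$ over $\mathcal{P}_{r}$ should agree, up to $o_{\eps}(1)$, with $2\sigma_{0}$ times the min--max value for the area, and White's theorem gives the latter exactly.

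The first step is to prove a uniform energy comparison: there exists a constant $C = C(\Sigma, V)$ such that for every $f$ in the admissible neighborhood $V$,
\[
|\bee(\Gamma(f)) - 2\sigma_{0}\,\mathcal{A}(\Gamma(f))| \leq C \eps.
\]
This is obtained by substituting the expansion \eqref{uExp} for $u^{\pm}_{\Gamma(f)}$ into the definition \eqref{BEdef} of $\bee$ and integrating in Fermi coordinates adapted to $\Gamma(f)$. The leading profile $\overline{\het}_{\eps}$ contributes $2\sigma_{0}\,\mathcal{A}(\Gamma(f))$ by the equipartition identity $(\overline{\het}')^{2}/2 + W(\overline{\het}) = (\overline{\het}')^{2}$ and the definition of $\sigma_{0}$; the subleading terms in \eqref{uExp}, weighted by the geometric quantities $H$, $|A|^{2}$, $\mathrm{Ric}(\nu,\nu)$ of $\Gamma(f)$, integrate to contribute $O(\eps)$ (as the $z$-integrals of $\overline{\omega}$, $\overline{\tau}$, $\overline{\rho}$, $\overline{\kappa}$ are $O(1)$ and they come with powers of $\eps$); and the remainder $\phi_{0,\eps}$ contributes $O(\eps^{3})$ by \eqref{rem1est}. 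Uniformity over $f \in V$ follows from the admissibility of $V$, which yields uniform $C^{k,\alpha}$-control on the geometry of $\Gamma(f)$ and on the Fermi coordinate change.

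The second step is to apply Theorem~\ref{WhiteTheorem} to the canonical family $\{\Sigma_{rv}\}_{v \in B^{k}}$, which is itself a canonical family in White's sense (simply reparametrized by the dilation $v \mapsto rv$). Any smooth $k$-parameter competitor agreeing with $\{\Sigma_{rv}\}$ on $\partial B^{k}$ thus satisfies $\sup_{v} \mathcal{A}(\tilde{\gamma}(v)) \geq \mathcal{A}(\Sigma)$. For a merely continuous $\gamma \in \mathcal{P}_{r}$, a standard density argument produces a smooth approximating family (modified by a cutoff in $v$ near $\partial B^{k}$ so that its boundary values match $\Sigma_{rv}$ exactly) that converges to $\gamma$ in $C^{0}(B^{k}, C^{k,\alpha}(\Sigma))$; since $\mathcal{A}$ is continuous on admissible $C^{k,\alpha}$-graphs, the inequality passes to the limit, giving $\sup_{v} \mathcal{A}(\gamma(v)) \geq \mathcal{A}(\Sigma)$. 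Combining this with the energy comparison and applying it once more to $\Sigma$ itself,
\[
\sup_{v} \bee(\gamma(v)) \geq 2\sigma_{0} \sup_{v} \mathcal{A}(\gamma(v)) - C\eps \geq 2\sigma_{0} \mathcal{A}(\Sigma) - C\eps \geq \bee(\Sigma) - 2C\eps.
\]
Choosing $\eps_{0}$ so that $2C\eps_{0} < \delta$ yields the theorem.

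The main obstacle is the uniformity of the energy expansion in the first step: the constants in \eqref{uExp}--\eqref{rem1est} depend a priori on $\Gamma(f)$, and one must verify they are controlled in terms of $V$ alone. Concretely, this amounts to showing that the construction of the Fermi tubular neighborhood, the cut-off scales in the definition of $\overline{\het}$, and the fixed-point scheme that produces $u_{\Gamma(f)}$ all behave uniformly in $f \in V$. The density argument in the second step is routine given the continuity of $\mathcal{A}$ in $C^{k,\alpha}$; one could alternatively appeal to an existing continuous version of White's theorem in the min--max literature.
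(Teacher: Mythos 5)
Your proposal is correct and shares the paper's overall strategy --- reduce to White's strong min--max theorem (Theorem \ref{WhiteTheorem}) by showing that $\bee$ and $2\sigma_{0}\mathcal{A}$ agree up to $O(\eps)$ uniformly on the admissible neighborhood --- but it establishes that comparison by a genuinely different mechanism. The paper argues by contradiction and compares the two functionals at the level of \emph{increments}: it Taylor-expands $\bee(\Gamma(f_{i}))$ about $\Sigma$, kills the first-order term using minimality of $\Sigma$ via \eqref{firstVarest0}, and identifies the Lagrange remainder with $\sigma_{0}$ times the corresponding remainder for the area functional by means of the second variation estimate of Theorem \ref{secVarEstTheorem}; that estimate is the technical heart of the paper's argument. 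You instead prove the stronger zeroth-order statement $|\bee(\Gamma(f)) - 2\sigma_{0}\mathcal{A}(\Gamma(f))| \leq C\eps$ by substituting the expansion \eqref{uExp} directly into \eqref{BEdef}, which bypasses the second variation machinery entirely and yields a direct (non-contradiction) proof with an explicit threshold $\eps_{0} = \delta/(2C)$. What this buys is a cleaner quantitative statement and independence from the second-variation computation; the price is that you must verify uniformity in $f$ of the constants in \eqref{uExp}--\eqref{rem1est}, which you correctly identify as the main point to check --- note that this is the same kind of uniformity the paper itself needs when it applies Theorem \ref{secVarEstTheorem} at the intermediate graphs $\Gamma(\xi_{i}f_{i})$ rather than at $\Sigma$, so it is not an extra burden peculiar to your route. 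Your handling of merely continuous competitors by smoothing in the parameter (or by invoking a continuous version of White's theorem) is also more careful than the paper's, which applies Theorem \ref{WhiteTheorem} to $C^{0}$ families without comment.
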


\begin{remark}
    Effectively, Theorem \ref{min max} says that any Allen--Cahn solution produced as a limit of broken phase transitions over surfaces which are small normal graphs over $\Sigma$ must have Allen--Cahn energy which is controlled by the area of $\Sigma$ from below. We note that the $\Gamma$-convergence result of \cite{marxkuo2023geometric} with \ref{min max} together imply the continuity of the min max value in $\eps$.
\end{remark}

Arguing as in the proof of the classical mountain pass principle for solutions to semilinear PDEs, we can show a mountain pass principle for the $\bee$ energies. We will state the following theorem more precisely as Theorem \ref{MPTheoremk}. Let $\mathcal{U}$ be a sufficently small neighborhood of $0$ in $C^{k, \alpha}(\Sigma)$ (here we take $k > 5$) so that the normal graph of any member of $\mathcal{U}$ is contained in the tubular neighborhood $U$ given by \ref{min max}.

\begin{theorem}\label{MPmain}
    Consider a family of normal graphs parametrized by the boundary of the unit $k$-ball $\{\Sigma_{v}\}_{v \in \partial B^{k}}$. Define $\mathcal{P}^{L}, c$ and $d$ as follows.

    \begin{align}
        \mathcal{P} &\doteq \{p \in C(B^{k}, \mathcal{U})| \text{ for every }v \in \partial B^{k}, \Gamma(p(v)) = \Sigma_{v}\},\\
        c &\doteq \max_{v \in \partial B^{k}} \bee(\Sigma_{v}),\\
        d &\doteq \inf_{p \in \mathcal{P}}\sup_{v \in B^{k}}\bee(\Gamma(p(v))).
    \end{align}

    If $d > c$, then there exists a smooth phase transition $u$ with $E_{\eps}(u) = d$, and moreover the level set $u^{-1}(0) \subset U$.
\end{theorem}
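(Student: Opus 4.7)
The plan is to execute a classical mountain pass argument for $\bee$ viewed as a $C^{2}$ functional on the open set $\mathcal{U} \subset C^{k,\alpha}(\Sigma)$, with the paths $p \in \mathcal{P}$ playing the role of mountain-traversing curves. The crucial algebraic observation that turns a critical point of $\bee$ into a smooth Allen--Cahn solution is the following: if $f \in \mathcal{U}$ is critical for $f \mapsto \bee(\Gamma(f))$, then the first variation formula \eqref{firstVarest0} forces $(u^{+}_{\nu})^{2} = (u^{-}_{\nu})^{2}$ pointwise on $\Gamma(f)$, so the broken phase transition $u_{\Gamma(f), \eps}$ matches $C^{1}$-smoothly across $\Gamma(f)$; since it already solves the semilinear equation \eqref{ACeq} on each side, standard elliptic regularity for semilinear equations upgrades it to a smooth solution of \eqref{ACeq} on all of $M$, with zero set $\Gamma(f) \subset U$ and $E_{\eps}(u) = \bee(\Gamma(f))$.

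The first step is to use the smooth dependence of $u^{\pm}_{\Sigma', \eps}$ on $\Sigma'$ proved in \cite{marxkuo2023geometric} to show $\bee \in C^{2}(\mathcal{U})$, with differential represented by a continuous map $f \mapsto \delta \bee|_{\Gamma(f)} \in (C^{k,\alpha}(\Sigma))^{*}$. Following Palais, I would then construct a locally Lipschitz pseudo-gradient vector field $X$ on the non-critical portion of $\mathcal{U}$, multiplied by a cutoff that vanishes on $\{f \in \mathcal{U} : \bee(\Gamma(f)) \leq c + \eta\}$ and equals $1$ on $\{f \in \mathcal{U} : \bee(\Gamma(f)) \geq d - \eta\}$. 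The resulting negative flow $\Phi_{t} : \mathcal{U} \to \mathcal{U}$ strictly decreases $\bee$ near the level $d$ and fixes $p(v)$ for every $v \in \partial B^{k}$ and $p \in \mathcal{P}$ (since there $\bee \circ \Gamma \leq c$), so that $\Phi_{t} \circ p \in \mathcal{P}$ whenever $p \in \mathcal{P}$.

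The main technical obstacle is establishing a Palais--Smale type compactness at level $d$: any sequence $f_{n} \in \mathcal{U}$ with $\bee(\Gamma(f_{n})) \to d$ and $\|\delta \bee|_{\Gamma(f_{n})}\|_{(C^{k,\alpha})^{*}} \to 0$ should admit a subsequence converging in $C^{k,\alpha}(\Sigma)$ to an interior critical point $f_{\infty} \in \mathcal{U}$. Uniform $C^{k,\alpha}_{\eps}$ bounds on the one-sided solutions $u^{\pm}_{\Gamma(f_{n}), \eps}$ follow from the asymptotic expansion \eqref{uExp}, and combined with Arzel\`a--Ascoli for the normal graphs yield a candidate limit $f_{\infty} \in C^{k,\alpha}(\Sigma)$ with $\delta \bee|_{\Gamma(f_{\infty})} = 0$ by continuity of the first variation. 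The delicate point is ruling out $f_{\infty} \in \partial \mathcal{U}$; I would handle this by shrinking both $\mathcal{U}$ and $\eps_{0}$, using that nondegeneracy of $\Sigma$ together with \eqref{uExp} makes $\bee \circ \Gamma$ uniformly quadratic modulo the $k$ unstable directions on a small enough admissible neighborhood, preventing escape of near-critical sequences to the boundary.

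With Palais--Smale in hand the deformation argument is standard. If $d$ were not a critical value, PS would produce a uniform lower bound $\|\delta \bee|_{\Gamma(f)}\| \geq \mu > 0$ on a neighborhood of $\{\bee \circ \Gamma = d\}$, so flowing $\Phi_{t}$ for a fixed time would decrease $\bee$ there by some $\beta > 0$. Choosing a near-optimal $p \in \mathcal{P}$ with $\sup_{v} \bee(\Gamma(p(v))) < d + \beta/2$ and applying $\Phi_{t}$ gives a deformed path in $\mathcal{P}$ whose supremum is strictly less than $d$, contradicting the definition of $d$. Hence $d$ is a critical value, and the corresponding critical point $f$ produces the smooth Allen--Cahn solution $u$ asserted by the theorem.
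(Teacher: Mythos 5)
Your skeleton (critical point of $\bee$ $\Rightarrow$ $C^{1}$-matching of the broken phase transition $\Rightarrow$ smooth Allen--Cahn solution, plus a pseudogradient deformation and a Palais--Smale condition) is the right one and matches the paper's strategy in outline. The identification of critical points of $\bee$ with smooth solutions via \eqref{firstVarest0} is correct. However, there are two places where your argument has genuine gaps relative to what is actually needed.

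First, you assume the truncated pseudogradient flow maps $\mathcal{U}$ into $\mathcal{U}$, so that $\Phi_{t}\circ p$ remains admissible. This is not automatic: the flow moves interior points of the path in $C^{k,\alpha}(\Sigma)$, and points of $p(B^{k})$ that are near $\partial\mathcal{U}$ with energy close to $d$ can be pushed out of $\mathcal{U}$, in which case the deformed path no longer competes in the infimum defining $d$. The paper's deformation lemma (Lemma \ref{DefLemma}) only yields $F(1, A_{c+\bar\eta}\cap U)\subset A_{c-\bar\eta}\cap U_{\delta}$ for a strictly larger admissible neighborhood $U_{\delta}$, and closing the contradiction then requires comparing the min-max value over paths in $U$ with the one over paths in $U_{\delta}$; this comparison is exactly where the strong min-max property (Theorem \ref{min max}, i.e.\ the continuity of the min-max values as the neighborhood and $\eps$ vary) is used, together with the quantitative Lemma \ref{QuantDef} near the nondegenerate $\Sigma$. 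Your proposal omits this step entirely, and without it the final inequality $\sup_{v}\bee(\Gamma(\Phi_{t}(p(v))))<d$ does not contradict the definition of $d$.

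Second, your Palais--Smale claim is stronger than what can be cheaply established and stronger than what is needed. Arzel\`a--Ascoli on a bounded set of $C^{k,\alpha}(\Sigma)$ gives convergence only in $C^{k,\beta}$ for $\beta<\alpha$, and more importantly your mechanism for ruling out that the limit lies on $\partial\mathcal{U}$ (``uniform quadraticity modulo the unstable directions'') is asserted, not proved, and is not how the difficulty is resolved. The paper instead proves a \emph{weak} Palais--Smale condition (Proposition \ref{PSc}): the broken phase transitions $u_{n}$ themselves form a Palais--Smale sequence for $E_{\eps}$, converge weakly in $W^{1,2}$ to a smooth entire solution $u_{\infty}$, and the enclosed regions converge as Caccioppoli sets; the limit interface need not be a smooth graph in $\mathcal{U}$ at all. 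Since the conclusion of the theorem only asks for a smooth phase transition $u$ with $u^{-1}(0)\subset U$, this weak compactness suffices and avoids the interior-limit problem you flag but do not solve. You should either adopt this weaker formulation or supply a genuine proof that near-critical sequences cannot accumulate on $\partial\mathcal{U}$.
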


By combining \ref{WhiteTheorem} and \ref{MPmain}, we can then show

\begin{corollary}\label{PacRitVar}
If $\Sigma$ is a smooth, closed, embedded, separating, nondegenerate minimal hypersurface in a closed Riemannian manifold $M$, then there exists a sequence of phase transitions $u_{i}$ solving \ref{ACeq} for a sequence $\eps_{i} \to 0$ as $i \to \infty$ with

\[
E_{\eps_{i}}(u_{i}) \to 2\sigma_{0} \mathcal{A}(\Sigma).
\]

Moreover, $u_{i}^{-1}(0)$ Hausdorff-converges to $\Sigma$ as $i \to \infty$.
\end{corollary}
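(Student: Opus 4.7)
The plan is to feed White's canonical family directly into the mountain pass principle \ref{MPmain}, using Theorem \ref{min max} to verify the strict mountain pass hypothesis $d > c$ for small $\eps$.

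First I would invoke Theorem \ref{WhiteTheorem} to obtain the $k$-parameter family $\Sigma_v = \Gamma(v_{1} u_{1} + \cdots + v_{k} u_{k})$, where $u_{1}, \ldots, u_{k}$ are the $L_\Sigma$-eigenfunctions corresponding to the $k$ negative eigenvalues. Since $\Sigma$ is nondegenerate, $L_\Sigma$ is strictly negative definite on this subspace; a second-order Taylor expansion of area around $\Sigma$, combined with compactness of $\partial B^k$, yields constants $\eta, r_0 > 0$ such that
\begin{equation*}
\mathcal{A}(\Sigma_{rv}) \leq \mathcal{A}(\Sigma) - \eta r^{2} \quad \text{for every } v \in \partial B^{k},\ 0 < r \leq r_0.
\end{equation*}

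Fix any such $r$. The $\Gamma$-convergence of $\bee$ to $2\sigma_{0}\mathcal{A}$ from \cite{marxkuo2023geometric} gives $\bee(\Sigma_{rv}) = 2\sigma_{0}\mathcal{A}(\Sigma_{rv}) + o(1)$ and $\bee(\Sigma) = 2\sigma_{0}\mathcal{A}(\Sigma) + o(1)$ uniformly in $v$ as $\eps \to 0$. Hence
\begin{equation*}
c := \max_{v \in \partial B^{k}} \bee(\Sigma_{rv}) \leq 2\sigma_{0}\mathcal{A}(\Sigma) - \sigma_{0}\eta r^{2}
\end{equation*}
for $\eps$ small, whereas applying Theorem \ref{min max} with $\delta := \tfrac14 \sigma_{0}\eta r^{2}$ and reducing $\eps$ further yields
\begin{equation*}
d \geq \bee(\Sigma) - \delta \geq 2\sigma_{0}\mathcal{A}(\Sigma) - 2\delta > c.
\end{equation*}
Theorem \ref{MPmain} then produces a smooth solution $u_{\eps}$ of \ref{ACeq} with $E_{\eps}(u_{\eps}) = d$ and $u_{\eps}^{-1}(0) \subset U$. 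The matching upper bound $d \leq 2\sigma_{0}\mathcal{A}(\Sigma) + o(1)$ is immediate by inserting White's family itself as a test path in $\mathcal{P}$ and invoking $\sup_{v}\mathcal{A}(\Sigma_{rv}) = \mathcal{A}(\Sigma)$ from \ref{WhiteTheorem}(2). Consequently $E_{\eps_i}(u_{\eps_i}) \to 2\sigma_{0}\mathcal{A}(\Sigma)$ along any sequence $\eps_{i} \to 0$.

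For the Hausdorff convergence of the zero sets, I would iterate this construction along a nested shrinking sequence of admissible tubular neighborhoods $U_1 \supset U_2 \supset \cdots$ of $\Sigma$, each furnishing its own threshold $\eps_0(j)$ via Theorems \ref{WhiteTheorem}, \ref{min max}, and \ref{MPmain}. A diagonal choice $\eps_{i} \leq \eps_0(i)$ gives solutions $u_i$ with $u_i^{-1}(0) \subset U_i$, which forces Hausdorff convergence $u_i^{-1}(0) \to \Sigma$. The principal obstacle is coordinating the three $\eps$-dependent estimates so that the geometric gap $\sigma_{0}\eta r^{2}$ dominates both the $\Gamma$-convergence error on $\partial B^{k}$ and the deviation $\bee(\Sigma) - 2\sigma_{0}\mathcal{A}(\Sigma)$; this pins down the order $\eps \ll r \ll 1$ and makes the diagonal argument for the Hausdorff statement delicate.
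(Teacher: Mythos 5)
Your proposal is correct and follows essentially the same route as the paper: the paper likewise takes the ball in the span of the index eigenfunctions as boundary data, checks $\lim_{\eps\to 0}c_{\eps}<2\sigma_{0}\mathcal{A}(\Sigma)$ from the strict negativity of the second variation there, gets the lower bound on $d_{\eps}$ from Theorem \ref{min max}, applies the mountain pass theorem, and obtains Hausdorff convergence by shrinking the admissible neighborhoods and diagonalizing. Your write-up merely makes the quantitative gap $d>c$ and the bookkeeping of the $\eps$-thresholds more explicit than the paper does.
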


\section{First and second variations of energy}\label{12Var sec}

The goal of this section is to compute the first and second variations of $\bee$ at an arbitrary point, and to prove a result analogous to \ref{secVarEstCrit} for sufficiently small normal graphs over a minimal hypersurface.

Let $F_{t}: M \to M$ be a $1$-parameter family of diffeomorphisms and set,

\begin{itemize}
    \item $F_{0} = \text{id}_{M}$,
    \item $\partial_{t} F_{t} \doteq X_{t}$. We write $X \doteq X_{0}$ and call $X$ the variational vector field of $F_{t}$.
    \item $Z_{t} \doteq \nabla_{\partial_{t}} X_{t}$. We write $Z \doteq Z_{0}$ and call $Z_{t}$ the acceleration vector field for $F_{t}$.
\end{itemize}

\begin{remark}
    We can take $X = f\nu$ for $f \in C^{\infty}(\Sigma)$ and we can take $Z_{t} \equiv 0$ for all sufficiently small $t$ by assuming $F_{t}$ is of the form

    \begin{equation}\label{goodVar}
    F_{t}(s, z) = \exp_{s}(tf(s)\chi(z)\nu(s)),
    \end{equation}

    where $\chi(z)$ is a suitably chosen cutoff function which is identically $1$ in a neighborhood of $z = 0$.
\end{remark}

Write $M^{\pm}_{t} \doteq F_{t}(M^{\pm})$ and $\Sigma_{t} \doteq F_{t}(\Sigma)$. For small $t$, we may compute the first and second variations of $\bee(\Sigma_{t})$. For the sake of concision we omit the subscript $t$.

\begin{proposition}\label{FirSecVar}
    Let $u^{\pm}, \dot{u}^{\pm}$ be as above. Then,

    \begin{align} \label{firvar}
        \frac{d}{dt} \bee(\Sigma_{t}) = -\frac{\eps}{2}\int_{\Sigma_{t}} \left(\left(\frac{\partial u^{+}}{\partial \nu} \right)^{2} - \left(\frac{\partial u^{-}}{\partial \nu} \right)^{2} \right) \langle X, \nu \rangle d \mathcal{H}^{n - 1},
    \end{align}

    and

    \begin{align} \label{secvar}
        \frac{d^{2}}{dt^{2}} \bee(\Sigma_{t}) &= -\eps \int_{\Sigma} \left( \frac{\partial u^{+}}{\partial \nu} \frac{\partial \dot{u}^{+}}{\partial \nu} - \frac{\partial u^{-}}{\partial \nu} \frac{\partial \dot{u}^{-}}{\partial \nu}\right) \langle X, \nu \rangle \\
        &\quad\quad + \left(\frac{\partial u^{+}}{\partial \nu}\frac{\partial^{2} u^{+}}{\partial \nu^{2}} - \frac{\partial u^{-}}{\partial \nu}\frac{\partial^{2} u^{-}}{\partial \nu^{2}} \right) \langle X, \nu \rangle^{2} \nonumber \\
      & \quad \quad + \frac{1}{2}\left(\left(\frac{\partial u^{+}}{\partial \nu} \right)^{2} - \left(\frac{\partial u^{-}}{\partial \nu} \right)^{2} \right)(\langle Z, \nu \rangle + \langle X, \dot{\nu} \rangle - H_{\Sigma} \langle X, \nu \rangle^{2}) d \mathcal{H}^{n - 1} \nonumber 
    \end{align}
\end{proposition}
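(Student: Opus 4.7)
The plan is to differentiate $\bee(\Sigma_t) = E^+(t) + E^-(t)$ directly, where $E^\pm(t)$ is the Dirichlet-type energy on $M_t^\pm = F_t(M^\pm)$. For each piece the Reynolds transport theorem yields an interior term plus a boundary flux. I integrate the interior contribution by parts using the Euler--Lagrange equation satisfied by $u^\pm_t$, which kills the bulk and leaves $\eps\int_{\Sigma_t} \dot{u}^\pm (\partial u^\pm/\partial \nu_\pm) \, d\mathcal{H}^{n-1}$. Differentiating the boundary condition $u^\pm_t|_{\Sigma_t} = 0$ gives $\dot{u}^\pm|_\Sigma = -\langle X,\nu\rangle \,\partial u^\pm/\partial \nu$, and, since $u^\pm|_\Sigma = 0$ kills all tangential gradients, $|\nabla u^\pm|^2|_\Sigma = (\partial u^\pm/\partial \nu)^2$ while $W(u^\pm)|_\Sigma = W(0)$ is constant. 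The two $W(0)/\eps$ boundary flux contributions come with opposite signs (since the outward normals to $M^+$ and $M^-$ are $\pm \nu$), so they cancel, and only the square of the normal derivatives survives, yielding \eqref{firvar}.

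For the second variation, differentiate the first-variation formula once more. Setting $Q_t = (\partial u^+_t/\partial \nu_t)^2 - (\partial u^-_t/\partial \nu_t)^2$ and $f_t = \langle X_t, \nu_t\rangle$, we have $\frac{d}{dt}\bee(\Sigma_t) = -\frac{\eps}{2}\int_{\Sigma_t} Q_t f_t \, d\mathcal{H}^{n-1}$, and the surface transport formula (under the normal variation $X = f\nu$ permitted by the preceding remark, for which $\Div_\Sigma X = -H_\Sigma f$) gives
\[
\frac{d^2}{dt^2}\bigg|_{t=0}\bee(\Sigma_t) = -\frac{\eps}{2}\int_\Sigma \left[\dot{Q}\, f + Q\,\dot{f} - H_\Sigma f^2 Q\right] d\mathcal{H}^{n-1},
\]
where $\dot{Q}, \dot{f}$ denote material derivatives along $F_t$. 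Directly, $\dot{f} = \langle Z, \nu\rangle + \langle X, \dot{\nu}\rangle$ follows from the definitions of $Z$ and $\dot{\nu}$ together with $\dot{\nu} \perp \nu$ (from $|\nu_t|=1$). For $\dot{Q}$, writing $\partial u^\pm/\partial \nu = \langle \nabla u^\pm, \nu\rangle$,
\[
\frac{d}{dt}\bigg|_{t=0} \frac{\partial u^\pm_t}{\partial \nu_t} = \frac{\partial \dot{u}^\pm}{\partial \nu} + \mathrm{Hess}(u^\pm)(X, \nu) + \langle \nabla u^\pm, \dot{\nu}\rangle,
\]
and on $\Sigma$, $\nabla u^\pm = (\partial u^\pm/\partial \nu)\nu$ makes the last term vanish via $\dot{\nu} \perp \nu$, while $\mathrm{Hess}(u^\pm)(f\nu, \nu) = f\, \partial^2 u^\pm/\partial \nu^2$ since $X = f\nu$. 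Squaring via the product rule, substituting into $\dot{Q}$, and regrouping produces the three terms displayed in \eqref{secvar}.

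The main technical obstacle is the sign bookkeeping: tracking that the two boundary flux contributions (from $\pm\nu$ as the outward normals to $M^\pm$) combine to yield the \emph{difference} $(\partial u^+/\partial\nu)^2 - (\partial u^-/\partial\nu)^2$ rather than a sum, and that the constant-on-$\Sigma$ term $W(0)/\eps$ drops out of the final expression exactly. A secondary subtlety is interpreting $\dot{\nu}$ as the covariant time derivative of $\nu_t$ along the flow curve $t \mapsto F_t(x_0)$, which is precisely what enables the clean cancellation $\langle\nabla u^\pm, \dot{\nu}\rangle = 0$ on $\Sigma$ and confines the Hessian contribution to its purely normal piece $f\,\partial^2 u^\pm/\partial \nu^2$.
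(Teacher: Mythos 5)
Your proposal is correct and follows essentially the same route as the paper: Reynolds transport plus integration by parts against the Euler--Lagrange equation for the first variation (with the constant $W(0)/\eps$ flux terms cancelling between $M^+$ and $M^-$), and then a second application of the surface transport formula to the first-variation integrand, split exactly as the paper's $I_1+I_2+I_3$ into the material derivative of $(\partial u/\partial\nu)^2$, the derivative of $\langle X_t,\nu_t\rangle$, and the variation of the volume form. Your explicit justification that $\langle\nabla u^\pm,\dot\nu\rangle=0$ on $\Sigma_t$ and that the Hessian term reduces to $f\,\partial^2 u^\pm/\partial\nu^2$ fills in a step the paper leaves implicit, but the argument is the same.
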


\begin{proof}
    By symmetry, it suffices to compute the variation on each side and then add together, so suppressing $+$ and $-$ signs and computing gives,

\begin{align*}
\frac{d}{dt} E_{\eps}(u_{t} ; M_{t}) &= \frac{d}{dt} \int_{M_{t}} \frac{\eps}{2} |\nabla u_{t}|^{2} + \frac{W(u)}{\eps} d \mathcal{H}^{n}\\
&= \int_{M_{t}} \eps \langle \nabla u_{t}, \nabla \dot{u}_{t} \rangle + \frac{W'(u_{t})\dot{u}_{t}}{\eps} d \mathcal{H}^{n}\\
& \quad \quad + \int_{\Sigma_{t}} \left( \frac{\eps}{2} |\nabla u_{t}|^{2} + \frac{W(u_{t})}{\eps}\right) \langle X_{t}, \nu_{t} \rangle d \mathcal{H}^{n - 1}\\
&= I_{1} + I_{2}.
\end{align*}

Integrating $I$ by parts and noting that $u_{t}$ vanishes along $\Sigma_{t}$ gives

\begin{align*}
    I_{1} &= \int_{M_{t}} \dot{u}_{t} \left(\frac{W'(u_{t})}{\eps} - \eps \Delta_{g}u_{t} \right) d \mathcal{H}^{n} + \int_{\Sigma_{t}} \eps \frac{\partial u_{t}}{\partial \nu_{t}} \dot{u}_{t} d \mathcal{H}^{n - 1}\\
    &= - \int_{\Sigma_{t}} \eps \left(\frac{\partial u_{t}}{\partial \nu_{t}} \right)^{2} \langle X_{t}, \nu_{t} \rangle d \mathcal{H}^{n - 1}.
\end{align*}

Now note for $I_{2}$ that since $u_{t}$ vanishes identically along $\Sigma_{t}$, it follows that $\nabla u_{t}$ is parallel to $\nu_{t}$. By the choice of the standard double well potential it similarly follows that $W(u_{t}) \equiv \frac{1}{4}$ along $\Sigma_{t}$. Putting that together gives

\begin{align*}
    I_{2} = \int_{\Sigma_{t}}  \left( \frac{\eps}{2} \left( \frac{\partial u_{t}}{\nu_{t}}\right)^{2} + \frac{1}{4\eps} \right) \langle X_{t}, \partial \nu_{t} \rangle d\mathcal{H}^{n - 1}.
\end{align*}

From which it follows that

\begin{align*}
\frac{d}{dt} E_{\eps}(u_{t}) = \int_{\Sigma_{t}} \left( -\frac{\eps}{2}  \left( \frac{\partial u_{t}}{\nu_{t}}\right)^{2} + \frac{1}{4\eps} \right) \langle X_{t}, \partial \nu_{t} \rangle d\mathcal{H}^{n - 1}.
\end{align*}

And so,

\begin{align*}
    \frac{d}{dt}\bee(\Sigma_{t}) &= \frac{d}{dt}E_{\eps}(u_{t}^{+}; M^{+}_{t}) + \frac{d}{dt} E_{\eps}(u^{-}_{t}; M^{-}_{t})\\
    &= -\frac{\eps}{2} \int_{\Sigma_{t}} \left(\left( \frac{\partial u^{+}_{t}}{\partial \nu_{t}}\right)^{2} - \left( \frac{\partial u^{-}_{t}}{\partial \nu_{t}}\right)^{2} \right) \langle X_{t}, \nu_{t} \rangle d \mathcal{H}^{n - 1}.
\end{align*}

To compute the second variation it suffices to compute

\begin{align*}
    I_{1} = \frac{d}{dt}\int_{\Sigma_{t}} \left( \frac{\partial u^{+}_{t}}{\partial \nu_{t}}\right)^{2} \langle X_{t}, \nu_{t} \rangle d \mathcal{H}^{n - 1}.
\end{align*}

For the purposes of simplifying notation we write $\Phi(x, t) = (\partial u^{+}_{t}/ \partial \nu_{t})^{2}(x, t)$. We pull back by the variation $F_{t}$ and compute.

\begin{align*}
    I_{1} &= \int_{\Sigma} \left( \frac{d}{dt} \Phi(F_{t}(x), t) \right) \langle X_{t}(F_{t}(x)), \nu_{t}(F_{t}(x))\rangle F_{t}^{\ast}d \mathcal{H}^{n-1}(x) \\
    & \quad \quad + \int_{\Sigma} \Phi(F_{t}(x), t) \left(\frac{d}{dt} \langle X_{t}(F_{t}(x)), \nu_{t}(F_{t}(x))\rangle\right) F_{t}^{\ast} d \mathcal{H}^{n-1}(x).\\
    & \quad \quad + \int_{\Sigma} \Phi(F_{t}(x), t) \langle X_{t}(F_{t}(x)), \nu_{t}(F_{t}(x)) \rangle \left( \frac{d}{dt} F_{t}^{\ast} d \mathcal{H}^{n - 1}(x)\right)\\
    &= I_{1} + I_{2} + I_{3}.
\end{align*}

The term in $I_{3}$ is the same as in the usual first variation of the area functional.

\begin{align*}
    I_{3} = -\int_{\Sigma_{t}} \Phi H_{\Sigma_{t}} \langle X_{t}, \nu_{t} \rangle^{2} d \mathcal{H}^{n - 1}.
\end{align*}

Dealing now with $I_{1}$, we compute

\begin{align*}
    \int_{\Sigma_{t}} \left(\frac{\partial \Phi}{\partial t} + \langle \nabla^{M} \Phi, X_{t} \rangle \right) \langle X_{t}, \nu_{t}\rangle d\mathcal{H}^{n-1}.
\end{align*}

Computing $I_{2}$ gives

\begin{align*}
    I_{2} &= \int_{\Sigma_{t}} \Phi \left(\langle \nabla_{\partial_{t}} X_{t}, \nu_{t} \rangle + \langle X_{t}, \nabla_{\partial_{t}}\nu_{t} \rangle \right) d\mathcal{H}^{n-1}\\
    &= \int_{\Sigma_{t}} \Phi\langle Z_{t}, \nu_{t} \rangle  d \mathcal{H}^{n-1},
\end{align*}

Putting all three parts together gives us

\begin{align*}
I_{1} + I_{2} + I_{3} &= \int_{\Sigma_{t}} \dot{\Phi}\langle X_{t}, \nu_{t} \rangle + (\Phi_{\nu_{t}} - \Phi H_{\Sigma_{t}}) \langle X_{t}, \nu_{t} \rangle^{2} +\\
&\quad \quad \Phi (\langle Z_{t}, \nu_{t} \rangle + \langle X_{t}, \nabla_{\partial_{t}}\nu_{t} \rangle ) d \mathcal{H}^{n - 1},
\end{align*}

which is precisely \ref{secvar}.
\end{proof}

\begin{remark}
    By selecting our variation $F_{t}$ as in \ref{goodVar}, we see that the terms in the second variation involving $Z$ and $\dot{\nu}$ vanish for all small $t$, and in this case we have simplified expressions for the first and second variation.
\end{remark}

\begin{corollary}\label{12Varf}
    Let $F_{t}$ be a variation such that $X_{t} = f\nu_{t}$ and $Z_{t} \equiv 0$ for $t$ sufficiently small. Then for such $t$ we have,

    \begin{align} \label{firvarF}
        \frac{d}{dt} \bee(\Sigma_{t}) = -\frac{\eps}{2}\int_{\Sigma_{t}} f\left(\left(\frac{\partial u^{+}}{\partial \nu} \right)^{2} - \left(\frac{\partial u^{-}}{\partial \nu} \right)^{2} \right)  d \mathcal{H}^{n - 1},
    \end{align}

    and

    \begin{align} \label{secvarf}
        \frac{d^{2}}{dt^{2}} \bee(\Sigma_{t}) &= -\eps \int_{\Sigma} f\left( \frac{\partial u^{+}}{\partial \nu} \frac{\partial \dot{u}^{+}}{\partial \nu} - \frac{\partial u^{-}}{\partial \nu} \frac{\partial \dot{u}^{-}}{\partial \nu}\right) \\
        & \quad \quad + f^{2}\left(\frac{\partial u^{+}}{\partial \nu}\frac{\partial^{2} u^{+}}{\partial \nu^{2}} - \frac{\partial u^{-}}{\partial \nu}\frac{\partial^{2} u^{-}}{\partial \nu^{2}} \right)  \nonumber \\
        & \quad \quad - H_{\Sigma}f^{2} \frac{1}{2}\left( \left(\frac{\partial u^{+}}{\partial \nu} \right)^{2} - \left(\frac{\partial u^{-}}{\partial \nu} \right)^{2} \right) d \mathcal{H}^{n - 1} \nonumber 
    \end{align}

\end{corollary}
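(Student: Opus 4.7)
The plan is to obtain the corollary by direct substitution into the variational formulas \eqref{firvar} and \eqref{secvar} from Proposition \ref{FirSecVar}, exploiting three elementary identities that follow from the hypotheses $X_t = f\nu_t$ and $Z_t \equiv 0$.

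First I would record the pointwise identities that will be inserted into the formulas. Since $X_t = f\nu_t$ and $\nu_t$ is a unit normal, $\langle X_t, \nu_t \rangle = f$, so $\langle X_t, \nu_t \rangle^2 = f^2$. By hypothesis $Z_t \equiv 0$ for small $t$, giving $\langle Z_t, \nu_t \rangle = 0$. Finally, differentiating the identity $\langle \nu_t, \nu_t \rangle \equiv 1$ in $t$ yields $\langle \nu_t, \dot\nu_t \rangle \equiv 0$, and therefore $\langle X_t, \dot\nu_t \rangle = f \langle \nu_t, \dot\nu_t \rangle = 0$.

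Substituting $\langle X_t, \nu_t \rangle = f$ into \eqref{firvar} immediately produces \eqref{firvarF}. For the second variation, I would substitute $\langle X_t, \nu_t \rangle = f$ and $\langle X_t, \nu_t \rangle^2 = f^2$ into the first two lines of \eqref{secvar}, which reproduce verbatim the first two lines of \eqref{secvarf}. In the third line of \eqref{secvar}, the factor $\langle Z, \nu \rangle + \langle X, \dot\nu \rangle - H_\Sigma \langle X, \nu \rangle^2$ collapses, by the two identities above, to $-H_\Sigma f^2$. Inserting this leaves precisely the last line of \eqref{secvarf}.

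There is no substantive obstacle: the corollary is a direct algebraic specialization of \ref{FirSecVar}. The only step that requires any thought is verifying $\langle X_t, \dot\nu_t \rangle = 0$, which follows from unit-length normalization of $\nu_t$, and confirming that the simplifying form \eqref{goodVar} of the variation (with $\chi \equiv 1$ near $z = 0$) does indeed produce $Z_t \equiv 0$ along $\Sigma_t$ for all sufficiently small $t$, so that the hypothesis of the corollary is consistent with the setup of Proposition \ref{FirSecVar}.
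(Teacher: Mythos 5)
Your proposal is correct and follows the same route the paper intends: the corollary is a direct specialization of Proposition \ref{FirSecVar} obtained by substituting $\langle X_t,\nu_t\rangle=f$ and killing the $Z$ and $\dot\nu$ terms, exactly as indicated in the remark preceding the corollary. Your justification that $\langle X_t,\dot\nu_t\rangle=0$ (from $\dot\nu_t\perp\nu_t$ and $X_t\parallel\nu_t$) is in fact slightly cleaner than the paper's appeal to the special form \eqref{goodVar}, since it holds for any normal variation.
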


\subsection{Estimate for the second variation.}

In this section, we use the asymptotic formulae \ref{uExp}, \ref{dotuExp} for $u^{\pm}$ and $\dot{u}^{\pm}$ to expand the second variation formula \ref{secvar}. More precisely we prove the following asymptotic formula for the second variation.

\begin{theorem}\label{secVarEstTheorem}
    Let $\Sigma_{t}$ be a $1$-parameter family of hypersurfaces as in Corollary \ref{12Varf}. Then 

    \begin{align}\label{2VarEst}
        \frac{d^{2}}{dt^{2}}\bee(\Sigma_{t}) &= 2\sigma_{0}\int_{\Sigma_{t}} |\nabla^{\Sigma_{t}} f|^{2} + (\textnormal{Ric}(\nu, \nu) + |A_{\Sigma_{t}}|^{2} - H_{\Sigma_{t}}^{2})f^{2} d \mathcal{H}^{n-1} + E(f)
    \end{align}

    Where $E(f) \leq C(\Sigma)\eps \|f\|_{W^{1,2}(\Sigma)}$.
\end{theorem}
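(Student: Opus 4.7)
The approach is to substitute the asymptotic expansions (\ref{uExp}) for $u^\pm$ and (\ref{dotuExp}) for $\dot u^\pm$ directly into the pointwise second variation formula (\ref{secvarf}) from Corollary \ref{12Varf}, and then simplify using the defining ODEs (\ref{AuxODEs}) for $\omega, \rho, \tau, \kappa$. This parallels the argument for Theorem 2.6 in \cite{marxkuo2023geometric}, which is the special case of (\ref{2VarEst}) when $H_\Sigma \equiv 0$; the new ingredient is bookkeeping for the $H_\Sigma$-dependent contributions that survive when $\Sigma$ need not be minimal.

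The first step is to differentiate (\ref{uExp}) once and twice in $z$ and evaluate on $\Sigma = \{z = 0\}$. Writing $\overline\het_\eps(z) = \overline\het(z/\eps)$, using $W'(0) = 0$, and evaluating each ODE in (\ref{AuxODEs}) at $z = 0$, I get $\het''(0) = 0$, $\omega''(0) = 0$, $\tau''(0) = 0$, $\kappa''(0) = 0$; consequently the \emph{a priori} singular $\eps^{-2}$ and $\eps^{-1}$ terms in $u^\pm_{\nu\nu}|_\Sigma$ cancel, and the expansions take the form
\begin{align*}
u^\pm_\nu\big|_\Sigma &= \pm\frac{\sigma}{\eps} + H_\Sigma\,\overline\omega'(0) + \eps\,\big(c_{R}\operatorname{Ric}(\nu,\nu) + c_A|A_\Sigma|^2 + c_H H_\Sigma^2\big) + O(\eps^2),\\
u^\pm_{\nu\nu}\big|_\Sigma &= q(s) + O(\eps),
\end{align*}
where the $\pm$ on the leading term reflects the orientation of $\nu$, while the subleading coefficients agree on the two sides. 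An analogous differentiation of (\ref{dotuExp}) yields an expansion for $\dot u^\pm_\nu|_\Sigma$ whose subleading contribution, obtained from the normal trace of $\phi_{1,\eps}^\pm$, involves $\Delta_\Sigma f$ through the Dirichlet--to--Neumann expansion of the linearised operator $\ell_{\eps, u}$ worked out in \cite{marxkuo2023dirichlettoneumann}.

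Substituting into (\ref{secvarf}), the $\pm$-symmetry forces the $\eps^{-2}$-singular pieces of the three integrands $u^\pm_\nu \dot u^\pm_\nu$, $u^\pm_\nu u^\pm_{\nu\nu}$, and $(u^\pm_\nu)^2$ to cancel; the surviving leading contribution arises from cross terms pairing the $\pm\sigma/\eps$ factor with the subleading $O(1)$ factor, which after the prefactor $\eps$ from (\ref{secvarf}) is restored become $O(1)$ on $\Sigma$. Integrating by parts on $\Sigma$ converts the resulting $f \Delta_\Sigma f$ contribution into $-|\nabla^\Sigma f|^2$, producing the right-hand side of (\ref{2VarEst}). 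The universal constants multiplying $|\nabla^\Sigma f|^2$ and $(\operatorname{Ric}(\nu,\nu) + |A|^2 - H^2)f^2$ are then pinned down by the identities obtained from testing each of (\ref{AuxODEs}) against $\het'$ and integrating on $[0,\infty)$; in particular, this procedure yields $\sigma\,\overline\omega'(0) = \sigma_0$, which is precisely what turns the kinetic piece into $2\sigma_0|\nabla^\Sigma f|^2$.

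The error estimate $E(f) \leq C(\Sigma)\eps\|f\|_{W^{1,2}}$ follows from the $C^{2,\alpha}_\eps$ bound (\ref{rem1est}) on $\phi_{0,\eps}$ (which controls the $O(\eps^2)$ remainder in $u^\pm_\nu$ and the $O(\eps)$ remainder in $u^\pm_{\nu\nu}$) together with the $W^{1,2}_{0,\eps}$ bound on $\phi_{1,\eps}^\pm$ (which, by Cauchy--Schwarz and the trace inequality, controls the error in $\int_\Sigma f u_\nu \dot u_\nu$). The main technical obstacle is locating the $|\nabla^\Sigma f|^2$ term, since it does not appear anywhere in the pointwise formula (\ref{secvarf}); it only emerges after one unpacks the subleading normal trace of $\dot u^\pm$ via the Dirichlet--to--Neumann asymptotics from \cite{marxkuo2023dirichlettoneumann} and then integrates by parts tangentially along $\Sigma$. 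A secondary nuisance is that extracting all $O(1)$ contributions requires tracking simultaneously the leading and the first subleading term of every expansion, with the singular parts conspiring to cancel.
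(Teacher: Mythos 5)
Your overall strategy is the same as the paper's: substitute the expansions \eqref{uExp} and \eqref{dotuExp} into \eqref{secvarf}, let the $\pm$ structure kill the singular terms, extract $|\nabla^{\Sigma}f|^{2}$ from the normal trace of the correction $\phi_{1,\eps}^{\pm}$ via the equation $\ell_{\eps,u^{\pm}}\phi^{\pm}=-\ell_{\eps,u^{\pm}}(h\overline{\het}'_{\eps})$ written in Fermi coordinates (this is exactly the part the paper also defers to the Dirichlet--to--Neumann analysis of \cite{marxkuo2023geometric}, \S 11.4), and control the error by \eqref{rem1est} and the $W^{1,2}_{0,\eps}$ bound on $\phi_{1,\eps}^{\pm}$.

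There is, however, a concrete error in your expansion of the second normal derivative, and it leaves a gap in the treatment of the term $-\eps\int_{\Sigma}f^{2}\bigl(u^{+}_{\nu}u^{+}_{\nu\nu}-u^{-}_{\nu}u^{-}_{\nu\nu}\bigr)$. You claim $u^{\pm}_{\nu\nu}\big|_{\Sigma}=q(s)+O(\eps)$ with no singular part, deducing $\omega''(0)=0$ from the displayed ODE. In fact the correct source term for $\omega$ is $\het'$ (this is what makes $\overline{\het}_{\eps}+\eps H\overline{\omega}_{\eps}$ solve \eqref{ACeq} to the next order, the forcing coming from the $-H_{z}\partial_{z}$ piece of the Laplacian in Fermi coordinates), so $\omega''(0)=\het'(0)=\sigma\neq 0$; equivalently, restricting $\eps^{2}\Delta_{g}u=W'(u)$ to $\Sigma$ and using $u\equiv 0$, $W'(0)=0$ gives directly
\begin{align*}
u^{+}_{\nu\nu}(s,0)=H_{\Sigma}(s)\,u^{+}_{\nu}(s,0)=\frac{H_{\Sigma}}{\eps\sqrt{2}}+O(1),
\end{align*}
which is the formula \eqref{ununu} used in the paper. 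Consequently the product $u^{\pm}_{\nu}u^{\pm}_{\nu\nu}$ is genuinely $O(\eps^{-2})$ on each side, and after the prefactor $\eps$ this term threatens an $O(\eps^{-1})$ contribution $\propto \int H_{\Sigma}f^{2}$; it disappears only because the expansion of $u^{-}$ is taken with respect to the opposite normal, so that the two summands cancel through order $\eps^{-1}$ and the whole term is $O(\eps)$. With your (incorrect) expansion, the cross term $\tfrac{\sigma}{\eps}\cdot q$ would instead survive at $O(1)$ and produce a spurious $\int qf^{2}$ contribution, and you never verify that the net $O(1)$ contribution of this term vanishes --- which is needed, since the $-H_{\Sigma}^{2}f^{2}$ in \eqref{2VarEst} is accounted for entirely by the volume-form variation term via the first variation formula \eqref{firstVarest0}. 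You need to restore the $\eps^{-1}$ term in $u_{\nu\nu}$ and carry out the orientation-reversal cancellation explicitly to close this step. (Minor further point: the identity you want from testing the $\omega$-equation against $\het'$ is $\sigma\,\omega'(0)=-\sigma_{0}$, with $\sigma_{0}=\int_{0}^{\infty}(\het')^{2}$; the sign matters when matching the coefficient $2\sigma_{0}$.)
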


\begin{proof}
    The proof is identical to the proof of Theorem 2.6 in \cite{marxkuo2023geometric} except in a few respects. The terms contributed by the variation of the volume form of $\Sigma_{t}$ and the terms containing two normal derivatives of the Dirichlet phase transitions do not appear in the calculation of the second variation at a critical point, so we deal with them here. The term containing a normal derivative of $\dot{u}$ is treated in exactly the same way, so we refer to the appendix of \cite{marxkuo2023geometric} for the proof.  We work in Fermi coordinates $(s, z)$ adapted to $\Sigma_{t}$. Recall,

    \begin{align}
        u_{\nu}^{+}(s, 0) &= \frac{1}{\eps\sqrt{2}} - \frac{2}{3} H_{\Sigma} + O(\eps), \label{unu}\\
        u_{\nu\nu}^{+}(s, 0) &= \frac{H_{\Sigma}}{\eps \sqrt{2}} - \frac{2}{3} H_{\Sigma}^{2} + O(\eps), \label{ununu}\\
        \dot{u}_{\nu}^{+}(s, 0) &= \phi^{+}_{z}(s, 0). \label{dotunu}
    \end{align}

    substituting in \ref{unu}, \ref{ununu} and \ref{dotunu} into the first two terms in \ref{secvarf} gives the following integral terms. 

    \begin{align}
        I_{1} \doteq - \eps \int_{\Sigma_{t}} f\left(\frac{1}{\eps\sqrt{2}} + O(1) \right)(\partial_{z}\phi^{+} - \partial_{z} \phi^{-})d\mathcal{H}^{n-1},
    \end{align}

    \begin{align}
        I_{2} &\doteq - \eps \int_{\Sigma_{t}} f^{2}\left(\frac{1}{\eps\sqrt{2}} -\frac{2}{3}H_{\Sigma_{t}} + O(\eps)\right)\left(\frac{H_{\Sigma_{t}}}{\eps\sqrt{2}} - \frac{2}{3}H_{\Sigma_{t}}^{2} + O(\eps) \right)\\
        &\quad \quad + f^{2}\left(-\frac{1}{\eps\sqrt{2}} + \frac{2}{3}H_{\Sigma_{t}} + O(\eps)\right)\left(\frac{H_{\Sigma_{t}}}{\eps\sqrt{2}} - \frac{2}{3}H_{\Sigma_{t}}^{2} + O(\eps) \right) d\mathcal{H}^{n-1}\nonumber
    \end{align}

    The third term contributes,

    \begin{align*}
        \frac{\eps}{2}\int_{\Sigma_{t}}H_{\Sigma_{t}}f^{2} \frac{1}{2}\left( \left(\frac{\partial u^{+}}{\partial \nu} \right)^{2} - \left(\frac{\partial u^{-}}{\partial \nu} \right)^{2} \right) d \mathcal{H}^{n - 1} = 2\sigma_{0}\int_{\Sigma_{t}}H_{\Sigma_{t}}^{2} f^{2} + C(\Sigma)\eps \|f\|_{L^{\infty}}^{2}
    \end{align*}

    by \ref{firstVarest0}. 

    The second summand in the integral $I_{2}$ appears with a reversed sign owing to the fact that the expansion of $u^{-}$ occurs with respect to the opposite normal vector as in Theorem \ref{uExp}. It follows that this term contributes only an error term which is $O(\eps)$ as $\eps \to 0$. Finally, to deal with the term $I_{1}$, we follow the proof of Theorem $(2.6)$ in \cite{marxkuo2023geometric}, only stating what the difference is in the current calculation.

    In the Fermi coordinates $(s, z)$, the Laplacian $\Delta_{g} = \partial_{z}^{2} - H_{z}\partial_{z} + \Delta_{z}$, where $H_{z}$ and $\Delta_{z}$ represent the mean curvature of the parallel hypersurface $\{p\in M|\text{d}(p, \Sigma_{t}) = z$, $\text{d}$ the signed distance function, and the Laplacian along the parallel hypersurface of distance $z$ respectively. Using the decomposition \ref{dotuExp}, it follows that $\ell_{\eps, u\pm}(\phi^{\pm}) = - \ell_{\eps, u^{\pm}}(h \overline{\het}_{\eps}')$, where $h = - f(s) u_{\nu}^{\pm}(s, 0)$. Expanding this using Fermi coordinates gives

    \begin{align*}
    \ell_{\eps, u} \phi = - \eps^{2}\Delta_{z}(h)\overline{\het}_{\eps}'+h \eps H_{z}  + (W''(u) - W''(\overline{\het}_{\eps}) + E)h\overline{\het}_{\eps}'
    \end{align*}

    Substituting this into the expression for each term of $I_{1}$,

    \begin{align}\label{expansion}
    \int_{\Sigma_{t}}f\phi^{+}_{z} &= - \frac{\sqrt{2}}{\eps^{2}}\int_{\Sigma_{t}}f \int_{0}^{-2\ell \log \eps} \bigg( -\eps^{2}\Delta_{z}(h)\overline{\het}_{\eps}^{'2} + \eps H_{z} h \overline{\het}_{\eps}^{''} \\
    &\quad \quad + (W''(u) - W''(\overline{\het}_{\eps}) + E) h \overline{\het}^{'2}_{\eps} \nonumber \\
    &\quad \quad - \sqrt{2}(\eps^{2}\Delta_{z}(\phi^{+}) - H_{z}\eps^{2}\phi^{+}_{z}\nonumber \\
    &\quad\quad - (W''(u) - W''(\overline{\het}_{\eps}) + E)\phi^{+})\overline{\het}_{\eps}'\bigg) \sqrt{\det g(s, z)}dzds \nonumber
    \end{align}

    That this term is equal to 

    \[
    \int_{\Sigma_{t}} |\nabla f|^{2} - (\textnormal{Ric}(\nu, \nu) + |A_{\Sigma_{t}}|^{2})f^{2} d\mathcal{H}^{n-1} + E(f)
    \]

    where $E(f) \leq C\eps^{\frac{1}{2}}\|f\|_{W^{1,2}}$ is the subject of \cite{marxkuo2023geometric}, section 11.4, the only difference in our situation, is that $H_{z}$ is not necessarily small in $\eps$, and so cannot be absorbed into the remainder, instead we note that the terms which appear with $H_{z}$ in the integral \ref{expansion} appear with opposite contributions from $\phi^{+}_{z}$ and $\phi^{-}_{z}$ respectively. 
\end{proof}

\section{The Strong min-max property}\label{minmaxSec}

We are ready to prove \ref{min max}.

\begin{proof}[Proof of Theorem \ref{min max}]
    We proceed by contradiction. Suppose there exists a $\delta > 0$ so that we can find for each $i$ a $k$-parameter family of surfaces $\{\Sigma_{p}^{i} | p \in B^{k}\}$ in $\mathcal{U}$ with

    \[
    \sup_{p \in B_{k}} \bei(\Sigma_{p}^{i}) \leq \bei(\Sigma) - \delta.
    \]

    In particular, for each $i$ we can find a sequence of functions $f_{i}$ so that each normal graph $\Gamma(f_{i})$ satisfies

    \[
    \bei(\Gamma(f_{i})) \leq \bei(\Sigma) - \frac{\delta}{2}.
    \]

    Taylor expanding $\bei$ around $f_{i}$ and applying Corollary 2.1 in \cite{marxkuo2023geometric} gives

    \[
    \bei(\Gamma(f_{i})) = \bei(\Sigma) + R_{\eps_{i}} + T_{\eps_{i}}(f_{i}),
    \]

    where $T_{\eps_{i}}$ is the remainder term in the first order Taylor expansion and,

    \[
    R_{\eps_{i}} = -\int_{\Sigma} H_{\Sigma} f + E_{i}(f),
    \]

    and $|E_{i}(f)| \leq C\eps_{i}$ where $C$ is a constant depending only on $\Sigma$. Since $\Sigma$ is a minimal surface it follows that $R_{\eps_{i}} = O(\eps_{i})$ as $i \to \infty$. It follows from Taylor's theorem that

    \[
    T_{i}(f_{i}) = \frac{1}{2} \frac{d^{2}}{dt^{2}}\bigg\vert_{t = \xi_{i}} \bei(\Gamma(\xi_{i} f_{i})),
    \]

    where $\xi_{i} \in (0, 1)$. However, from Theorem \ref{secVarEstTheorem}, this is 

    \[
    \sigma_{0}\frac{d^{2}}{dt^{2}}\bigg\vert_{t = \xi_{i}} \mathcal{A}(\Gamma(\xi_{i}f_{i}))  + \tilde{R}_{\eps_{i}}
    \]

    where $R_{\eps_{i}} \to 0$ as $i \to \infty$. In particular, it follows that $T_{\eps_{i}}(f_{i}) - 2\sigma_{0}T(f_{i}) \to 0$ as $i \to \infty$, where $T(f_{i})$ is the Taylor remainder term when expanding the area functional about $f_{i}$. Therefore it follows that, for $i$ sufficiently large

    \[
    |\bei(\Gamma(f_{i})) - \mathcal{A}(\Gamma(f_{i}))| \leq \frac{\delta}{4}.
    \]

    So for any such $i$ we have

    \[
    \sigma_{0}\sup_{p \in B^{k}} \mathcal{A}(\Sigma_{p}^{i}) \leq \frac{\delta}{4} + \sup_{p \in B^{k}} \bei(\Sigma_{p}^{i}) \leq \bei(\Sigma) - \frac{\delta}{4}.
    \]

    However, $\bei(\Sigma) \to \mathcal{A}(\Sigma)$, so this implies that

    \[
    \sup_{p \in B^{k}} \mathcal{A}(\Sigma_{p}^{i}) < \mathcal{A}(\Sigma),
    \]

    which violates \ref{WhiteTheorem}.

\end{proof}

\section{A weak Palais-Smale condition}\label{PSSec}

Let $\mathcal{S}^{k, \alpha}$ denote the space of all $2$-sided, closed, $C^{k, \alpha}$-hypersurfaces of $M$ which are homologous to $\Sigma$ (and moreover which are therefore boundaries). We wish to show the following Palais-Smale type compactness condition holds.

\begin{proposition} \label{PSc}
\noindent Let $\{\Sigma_{n}\}$ be a sequence of hypersurfaces in $\mathcal{S}^{k, \alpha}$ and suppose $d > 0$. Then if \bigskip

\begin{enumerate}
    \item $\delta\bee(\Sigma_{n}) \to 0$,
    \item $\bee(\Sigma_{n}) \to d$,
    \item $\sup_{n} \textnormal{Area}(\Sigma_{n}) < \infty$
\end{enumerate}
\bigskip

then, up to taking a subsequence, there exists a Caccioppoli set $M^{+}$ for which $M^{+}_{n} \to M^{+}$ and $\Sigma_{\infty} \doteq \partial M^{+}$, and moreover there is a smooth phase transition $u: M \to \bf{R}$ which vanishes on $\Sigma_{\infty}$
\end{proposition}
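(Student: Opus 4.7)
The plan is to extract limits in two stages---geometric for the interfaces and PDE-theoretic for the associated broken phase transitions---and then use the first-variation decay to upgrade the PDE limit to a critical point of $E_\eps$. Define $u_n$ to be the sign-corrected broken phase transition: $u_n = u^+_{\Sigma_n,\eps}$ on $M^+_n$ and $u_n = -u^-_{\Sigma_n,\eps}$ on $M^-_n$, with $u_n \equiv 0$ on $\Sigma_n$. Since $W$ is even (so $W'$ is odd), $u_n$ solves the Allen--Cahn equation classically on $M \setminus \Sigma_n$, changes sign across $\Sigma_n$, satisfies $|u_n| \le 1$ by the maximum principle, and has $E_\eps(u_n) = \bee(\Sigma_n)$.

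The area hypothesis (3) is a uniform perimeter bound on $M^+_n$, so BV compactness on the closed manifold $M$ produces, after passing to a subsequence, a Caccioppoli set $M^+$ with $|M^+_n \Delta M^+| \to 0$; set $\Sigma_\infty := \partial^* M^+$. The energy bound $\bee(\Sigma_n) \to d$ together with $|u_n|\le 1$ gives a uniform $W^{1,2}(M)$ bound on $u_n$, so a further subsequence has $u_n \rightharpoonup u$ weakly in $W^{1,2}$, strongly in $L^2$, and a.e.

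The crucial step is passing to the limit in the Allen--Cahn equation. Integrating by parts on each side of $\Sigma_n$ against $\phi \in C^\infty(M)$ and using $-\eps\Delta u_n + W'(u_n)/\eps = 0$ on $M\setminus\Sigma_n$ yields
\[
\delta E_\eps(u_n)(\phi) = \eps\int_{\Sigma_n}\phi\,[u_n^\nu]\,d\mathcal H^{n-1},\qquad [u_n^\nu] = (u^+_n)_\nu + (u^-_n)_\nu.
\]
Factoring the integrand of the first variation of $\bee$ (formula \eqref{firvar}) as
\[
(u^+_n)_\nu^2 - (u^-_n)_\nu^2 = \bigl[(u^+_n)_\nu + (u^-_n)_\nu\bigr]\cdot\bigl[(u^+_n)_\nu - (u^-_n)_\nu\bigr],
\]
and using the expansion \eqref{uExp} to estimate the second factor as $\sqrt{2}/\eps + O(1)$ (uniformly in $n$ provided the sequence admits uniform regularity estimates), one finds that $\delta\bee(\Sigma_n)(f\nu)$ agrees with $\tfrac{\sqrt{2}}{2}\int f\,[u_n^\nu]$ up to an error of order $\eps$. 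Consequently $\delta\bee(\Sigma_n) \to 0$ forces $\int_{\Sigma_n} f\,[u_n^\nu] \to 0$ for $f$ in the admissible test class; choosing $f = \phi|_{\Sigma_n}$ yields $\delta E_\eps(u_n)(\phi) \to 0$. Passing to the limit using $u_n \to u$ strongly in $L^2$ and $\nabla u_n \rightharpoonup \nabla u$ weakly in $L^2$ gives $\delta E_\eps(u)(\phi) = 0$ for every $\phi$, so $u$ is a bounded weak Allen--Cahn solution. Standard elliptic bootstrapping (starting from $|u|\le 1$) upgrades $u$ to a smooth classical solution. Finally, $u_n > 0$ on $M^+_n$ and $u_n < 0$ on $M^-_n$, together with $\chi_{M^\pm_n} \to \chi_{M^\pm}$ in $L^1$ and a.e.\ convergence of $u_n$, gives $u \ge 0$ a.e.\ on $M^+$ and $u \le 0$ a.e.\ on $M^-$; continuity of $u$ extends these inequalities to the closures, so $u$ vanishes on $\overline{M^+}\cap\overline{M^-} \supseteq \Sigma_\infty$.

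The main technical obstacle is the jump-control step in the PDE passage to the limit. The factorization itself is purely algebraic, but the division by the nonvanishing factor $(u^+_n)_\nu - (u^-_n)_\nu$ and the transfer of the abstract test functional to the trace $\phi|_{\Sigma_n}$ both rest on the $O(1)$ remainder in \eqref{uExp} being uniform in $n$, which in turn requires uniform regularity of the sequence $\Sigma_n$. Such uniformity does not follow directly from the stated hypotheses; the expected route is to bootstrap the decay $\delta\bee(\Sigma_n)\to 0$, which via the asymptotic form of the first variation encodes approximate minimality of $\Sigma_n$, together with the area bound into a uniform curvature estimate using classical elliptic regularity for mean-curvature-type equations.
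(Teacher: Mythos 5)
Your proposal follows essentially the same route as the paper: pass to the broken phase transitions $u_{n}$, convert $\delta\bee(\Sigma_{n})\to 0$ into $\delta E_{\eps}(u_{n})\to 0$, extract a weak $W^{1,2}$ limit which is a smooth Allen--Cahn solution by elliptic regularity, and combine BV compactness of $M^{+}_{n}$ with the sign information and strong $L^{1}$ convergence to identify $M^{+}$ and conclude that $u$ vanishes on $\Sigma_{\infty}$. The jump-factorization step you elaborate (and whose uniformity in $n$ you rightly flag as requiring uniform regularity of the $\Sigma_{n}$) is precisely the step the paper dispatches with ``it is easy to see that $E_{\eps}'(u_{n})\to 0$,'' so your treatment is, if anything, more explicit than the paper's own.
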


\begin{remark}
    The boundary $\Sigma_{\infty}$ need not be smooth, as there can be points along $\Sigma_{\infty}$ where $\nabla u = 0$. However, the phase transition $u$ will be smooth everywhere by standard elliptic regularity.
\end{remark}

\begin{proof}[Proof of Proposition \ref{PSc}]
Suppose $u_{n}$ denotes the broken phase transition associated to the hypersurface $\Sigma_{n}$. Then it is easy to see that $u_{n}$ satisfies

\begin{align} 
    E_{\varepsilon}'(u_{n}) \to 0 \\
    E_{\varepsilon}(u_{n}) \to d \label{nonzero energy}
\end{align}

Therefore, by the classical Palais-Smale condition, there is a function $u_{\infty}$ with $u_{n} \xrightharpoonup[W^{1,2}]{} u_{\infty}$. It follows that $u_{\infty}$ is a weak solution to the Allen-Cahn equation in $W^{1, 2}(M)$, and therefore by elliptic regularity it is a smooth phase transition. By \ref{nonzero energy}, $M^{+} \doteq \{u_{\infty} > 0\}$ and $M^{-} \doteq \{u_{\infty} < 0\}$ are nonempty. We note that on $M^{+}$, $u_{\infty}$ is a positive Dirichlet solution to the problem

\[
\begin{cases}
\Delta_{g} u_{\infty} = \frac{W'(u)}{\varepsilon^{2}}, &\text{in } M^{+},\\
u_{\infty} \equiv 0, &\text{on }\partial M^{+}.
\end{cases}
\]

By the weak convergence of $u_{n}$ to $u_{\infty}$ in $W^{1, 2}$, it follows from Rellich's theorem that $u_{n} \to u_{\infty}$ strongly in $L^{2}$. Moreover, by  compactness of $M$, we may pass to a further subsequence with $u_{n} \to u_{\infty}$ in $L^{1}$. Note that the measure of $\partial M^{+}$ is $0$ by the results of \cite{caffarelli1985partial}. It suffices to show the following claim.

\begin{claim}\label{subclaim1}
    $M_{n}^{+} \to M^{+}$.
\end{claim}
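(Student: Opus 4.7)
The plan is to extract pointwise almost-everywhere convergence of the indicator functions from the strong $L^{1}$ convergence $u_{n} \to u_{\infty}$ already obtained, and then conclude via bounded convergence on the compact manifold $M$. The essential input is the fact, flagged already in the excerpt, that the nodal set $\{u_{\infty} = 0\}$ has Riemannian measure zero; this follows from the partial regularity results of \cite{caffarelli1985partial} (in fact the singular stratum has Hausdorff codimension at least $7$, but for the claim only the measure-zero statement is needed).

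First I would note that by construction $M_{n}^{+} = \{u_{n} > 0\}$ (Brezis--Oswald pins down the sign of the broken phase transition on $M_{n}^{+}$) and $M^{+} = \{u_{\infty} > 0\}$. Since $u_{n} \to u_{\infty}$ in $L^{1}(M)$, after passing to a subsequence (not relabeled) we may assume $u_{n}(x) \to u_{\infty}(x)$ for $\mathcal{H}^{n}$-a.e.\ $x \in M$. On the open set $M^{+}$ we have $u_{\infty}(x) > 0$, hence $u_{n}(x) > 0$ for all $n$ sufficiently large, so $\chi_{M_{n}^{+}}(x) \to 1 = \chi_{M^{+}}(x)$ at a.e.\ $x \in M^{+}$. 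Analogously, at a.e.\ $x \in M^{-}$ we have $u_{n}(x) < 0$ eventually and so $\chi_{M_{n}^{+}}(x) \to 0 = \chi_{M^{+}}(x)$. Since $\{u_{\infty} = 0\}$ is $\mathcal{H}^{n}$-negligible, the two cases combine to give $\chi_{M_{n}^{+}} \to \chi_{M^{+}}$ a.e.\ on all of $M$.

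At this stage I would invoke the dominated convergence theorem, with the constant function $1$ serving as an integrable dominator on the compact manifold $M$, to obtain
\begin{equation*}
    |M_{n}^{+} \Delta M^{+}| = \int_{M} |\chi_{M_{n}^{+}} - \chi_{M^{+}}| \, d\mathcal{H}^{n} \longrightarrow 0
\end{equation*}
along the chosen subsequence. A standard subsequence--subsubsequence argument (any subsequence of the original $(M_{n}^{+})$ admits a further subsequence along which $u_{n} \to u_{\infty}$ a.e., and hence along which $|M_{n}^{+} \Delta M^{+}| \to 0$) then promotes the convergence to the full sequence, which is precisely the definition of Caccioppoli-set convergence $M_{n}^{+} \to M^{+}$ used in the notation section.

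The main obstacle in this plan is the measure-zero property of $\{u_{\infty} = 0\}$, which I am treating as a black box from \cite{caffarelli1985partial}; without this one cannot rule out the indicator functions failing to converge on a set of positive measure where $u_{\infty}$ vanishes. Everything else is an entirely routine passage from strong $L^{1}$ convergence to a.e.\ convergence of indicator functions of superlevel sets.
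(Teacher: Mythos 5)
Your argument is correct, but it is genuinely different from the one in the paper. The paper proves the claim by contradiction: assuming $|M_{n}^{+}\Delta M^{+}|>\delta$ along a subsequence, it invokes the uniform area bound on $\Sigma_{n}$ and BV-compactness to extract an $L^{1}$-limit $M_{\infty}^{+}$ of the $M_{n}^{+}$, and then shows that $|M_{\infty}^{+}\Delta M^{+}|\geq\delta$ is incompatible with the $L^{1}$ convergence $u_{n}\to u_{\infty}$ (using, as you do, that the nodal set of $u_{\infty}$ is null). Your route is more direct and more elementary: you never need BV-compactness or the perimeter bound for the convergence statement itself, since a.e.\ convergence of $u_{n}$ along a subsequence forces a.e.\ convergence of the indicators $\chi_{M_{n}^{+}}\to\chi_{M^{+}}$ off the null set $\{u_{\infty}=0\}$, and dominated convergence plus the subsequence--subsubsequence trick finishes the job. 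What the paper's BV-compactness detour buys is the other half of the proposition's conclusion, namely that the limit is a Caccioppoli set; in your approach that has to be recovered separately, e.g.\ by lower semicontinuity of perimeter under $L^{1}$ convergence together with hypothesis (3). Both arguments rest on the same two external inputs: the sign convention $M_{n}^{+}=\{u_{n}>0\}$ (up to the null set $\Sigma_{n}$) for the broken phase transition, and the measure-zero property of $\{u_{\infty}=0\}$ from \cite{caffarelli1985partial}.

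One small correction: the parenthetical attributing Hausdorff codimension at least $7$ to the singular part of the nodal set is a misstatement --- that bound belongs to the regularity theory of area-minimizing hypersurfaces. Caffarelli--Friedman show the zero set of a nontrivial solution has Hausdorff dimension at most $n-1$, with a lower-dimensional singular stratum. Since you only use the measure-zero statement, this does not affect the proof, but the citation should not carry that claim.
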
 

\begin{proof}[Proof of Claim \ref{subclaim1}]
    Suppose it is not the case. Then, possibly up to taking a subsequence, we may find a $\delta > 0$ so that $|M^{+}_{n} \Delta M^{+}| > \delta$. However, the area bound on $\Sigma_{n}$ implies that $\{M_{n}^{+}\}$ is of uniformly bounded variation, so $\text{BV}$-compactness (\cite{maggi2012sets}, Theorem 12.26) then implies that there is a measurable set $M_{\infty}^{+}$ so that

    \[
    M_{n}^{+} \to M_{\infty}^{+}.
    \]

    \noindent Therefore it follows that

    \[
    |M_{\infty}^{+} \Delta M^{+}| \geq \delta.
    \]

    \noindent So moreover there is, for some $k_{0} < 0$, a set $A \subset M_{\infty}^{+}$ of positive measure so that

    \[
    u_{\infty}\big\vert_{A} \leq k_{0}.
    \]

    \noindent However, $u_{n} \geq 0$ along $A$ by construction, and $u_{n}\big\vert_{A} \xrightarrow[L^{1}]{} u_{\infty}\big\vert_{A}$, which is a contradiction.
\end{proof}

The proof is now complete.
\end{proof}

\section{A mountain pass theorem}\label{MPSec}

\subsection{A Deformation Lemma}\label{DefLemmaSec}

 In order to prove an appropriate mountain-pass type theorem, we need to prove some variant of the usual deformation lemma \cite{rabinowitz2011variational}. 

\bigskip

Let $\Sigma \subset M$ denote a fixed, 2--sided, separating $C^{\infty}$ hypersurface for which $\bee$ is well-defined. For $f \in C^{k, \alpha}(\Sigma)$ we define $\bee(f) \doteq \bee(\Gamma(f))$, provided it is defined. Note that there is an $r > 0$ so that $\bee : B^{k, \alpha}_{r}(0) \to \mathbf{R}$ is well-defined, where $B^{k, \alpha}_{r}(f)$ denotes the ball of radius $r$ in $C^{k, \alpha}(\Sigma)$ about the function $f$. Let $c, \rho \in \mathbf{R}$ with $\rho < r$ . Let $U \subset B_{r}^{k, \alpha}(0)$. We define the following sets.

\begin{align}\label{Spaces}
    &\mathcal{O}_{U} = \{\Omega \in \mathcal{C}(M)| \exists \Omega_{n} \in \mathcal{C}(M), \partial \Omega_{n} = \Gamma(f), f \in U, |\Omega \Delta \Omega_{n}| \to 0\}.\\
    &A_{c}^{\eps} \doteq \{f \in C^{k, \alpha}(\Sigma) \vert \bee(f) \leq c\},\\
    &K_{c, U}^{\eps} \doteq \{\Omega \in \mathcal{O}_{U} \vert \partial \Omega = u^{-1}(0), E_{\eps}(u) = c, \delta E_{\eps}(u) = 0 \},\\
    &K_{U}^{\eps} \doteq \{\Omega \in \mathcal{O}_{U} \vert \partial \Omega = u^{-1}(0), \delta E_{\eps}(u) = 0 \}
    \end{align}

Let $\delta > 0$ and suppose $U \subset B^{k, \alpha}_{r}(0)$ is an admissible neighborhood of $0$ and that $B_{\delta}^{k, \alpha}(U), B_{2\delta}^{k, \alpha}(U) \subset B^{k, \alpha}_{r}(0)$ are also admissible.

 We wish to show that if $K_{c, U}^{\eps}$ is empty, we can always deform $A_{c + \eta}$ into $A_{c - \eta}$ for some appropriate $\eta > 0$ without running into a critical point.

\begin{lemma}[Localized deformation] \label{DefLemma}
Suppose $c > 0$ is such that $K^{\eps}_{c, U_{2\delta}} = \emptyset$. There are constants $0 < \bar{\eta} < \eta$ and a homeomorphism $F \in C([0, 1] \times C^{k, \alpha}(\Sigma); C^{k, \alpha}(\Sigma))$ so that

\begin{enumerate}
    \item $F(0, u) = u,$
    \item $F(1, u) = u$ for all $u \not \in \bee^{-1}[c - \eta, c + \eta] \cap U_{2\delta}$,
    \item $F(1, A_{c + \bar{\eta}} \cap U) \subset A_{c - \bar{\eta}} \cap U_{\delta}$.
    \item $F(t, u)$ is a homeomorphism for all $t \in [0, 1]$.
\end{enumerate}
\end{lemma}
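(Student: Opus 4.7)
The proof follows the classical deformation lemma argument of Rabinowitz (see, e.g., \cite{rabinowitz2011variational}), adapted to our setting where the underlying Banach space is $C^{k,\alpha}(\Sigma)$ and the functional is $\bee$ viewed as a map on normal graphs. My plan is to construct a localized pseudo-gradient flow that strictly decreases $\bee$ on the sublevel set of interest, and to take its time-1 map as the desired homeomorphism $F$.

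\textbf{Step 1 (Quantitative non-critical bound from Palais--Smale).} Using Proposition \ref{PSc}, the hypothesis $K^{\eps}_{c, U_{2\delta}} = \emptyset$ upgrades to a uniform coercivity bound: there exist constants $\eta > 0$ and $\mu > 0$ so that
\begin{equation*}
\|\delta \bee(f)\|_{(C^{k,\alpha}(\Sigma))^{\ast}} \geq \mu \quad \text{for all } f \in \bee^{-1}[c-\eta, c+\eta] \cap \overline{U_{2\delta}}.
\end{equation*}
Indeed, were this to fail, one could extract a sequence $f_{n}$ along which $\bee(f_{n}) \to c$ and $\delta\bee(f_{n}) \to 0$; the area of $\Gamma(f_{n})$ is uniformly bounded since the $f_{n}$ lie in a fixed $C^{k,\alpha}$-neighborhood, so Proposition \ref{PSc} would produce a critical Caccioppoli set in $K^{\eps}_{c, U_{2\delta}}$, contradicting the hypothesis.

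\textbf{Step 2 (Localized pseudo-gradient vector field).} On the non-critical open set $\mathcal{N} \doteq \bee^{-1}(c-\eta, c+\eta) \cap U_{2\delta}$ I construct a locally Lipschitz vector field $V : C^{k,\alpha}(\Sigma) \to C^{k,\alpha}(\Sigma)$ satisfying $\|V(f)\|_{C^{k,\alpha}} \leq 1$ and $\delta\bee(f)\cdot V(f) \geq \tfrac{1}{2}\|\delta\bee(f)\|$ for every $f \in \mathcal{N}$. This is the standard pseudo-gradient construction from Rabinowitz: for each $f \in \mathcal{N}$ pick a unit vector $w_{f}$ realizing $\delta\bee(f)\cdot w_{f} \geq \tfrac{3}{4}\|\delta\bee(f)\|$, use continuity of $\delta\bee$ to find an open neighborhood on which a slightly rescaled $w_{f}$ works, and glue via a locally finite Lipschitz partition of unity. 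Next choose Lipschitz cutoffs $\chi_{1} : C^{k,\alpha}(\Sigma) \to [0,1]$ equal to $1$ on $\overline{U_{\delta}}$ and vanishing outside $U_{2\delta}$, and $\chi_{2} : \mathbf{R} \to [0,1]$ equal to $1$ on $[c-\bar{\eta}, c+\bar{\eta}]$ and vanishing outside $[c-\eta, c+\eta]$, where $\bar{\eta} \in (0, \eta)$ will be selected below. Define the full vector field
\begin{equation*}
\widetilde{V}(f) \doteq -\chi_{1}(f)\,\chi_{2}(\bee(f))\, V(f),
\end{equation*}
extended by zero outside $\mathcal{N}$. Then $\widetilde V$ is globally locally Lipschitz and uniformly bounded on $C^{k,\alpha}(\Sigma)$.

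\textbf{Step 3 (Flow and conclusion).} Since $\widetilde{V}$ is bounded and locally Lipschitz, for every $u \in C^{k,\alpha}(\Sigma)$ the ODE $\dot\eta(t) = \widetilde V(\eta(t))$ with $\eta(0) = u$ admits a unique global solution, and the time-$t$ map $F(t, \cdot)$ is a homeomorphism of $C^{k,\alpha}(\Sigma)$, establishing (1) and (4) immediately; property (2) follows because $\widetilde V \equiv 0$ off $\bee^{-1}[c-\eta, c+\eta] \cap U_{2\delta}$. For (3), along a trajectory in $\mathcal{N}$,
\begin{equation*}
\tfrac{d}{dt}\bee(\eta(t)) = \delta\bee(\eta(t))\cdot \widetilde V(\eta(t)) \leq -\tfrac{1}{2}\chi_{1}(\eta(t))\,\chi_{2}(\bee(\eta(t)))\,\mu.
\end{equation*}
I replace the parameter $t$ by $t/T$ for a sufficiently large $T$ (equivalently, replace $\widetilde V$ by $T\widetilde V$) and then pick $\bar\eta$ so small that any trajectory starting in $A_{c+\bar\eta} \cap U$ either already lies in $A_{c-\bar\eta}$ by time $1$, or remains in the region $\{\chi_{1} = \chi_{2} = 1\}$ long enough that the inequality above forces an energy drop of at least $2\bar\eta$; a standard book-keeping of the possible ways a trajectory can leave $U_\delta$ (only by passing through $\bee^{-1}[c-\bar\eta, c+\bar\eta]$, where the energy is already below $c-\bar\eta$) closes the argument.

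\textbf{Main obstacle.} The principal technical point is Step 1: one must translate the weak Palais--Smale condition of Proposition \ref{PSc}, which is formulated in terms of broken phase transitions and Caccioppoli convergence of the associated regions, into a uniform quantitative lower bound on the $C^{k,\alpha}$-dual norm of $\delta\bee$. This requires verifying that an almost-critical sequence of normal graphs $f_n \in \overline{U_{2\delta}}$ with $\delta \bee(f_n) \to 0$ produces a limit broken phase transition whose nodal set $\Sigma_\infty$ is itself the boundary of an element of $\mathcal{O}_{U_{2\delta}}$, so that the resulting critical point lives in $K^{\eps}_{c, U_{2\delta}}$ and the desired contradiction is reached.
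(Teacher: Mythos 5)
Your proposal follows the same route as the paper: a quantitative gradient lower bound on $(A_{c+\eta}\setminus A_{c-\eta})\cap U_{2\delta}$ extracted by contradiction from Proposition \ref{PSc}, a cut-off pseudogradient field (the paper combines your $\chi_1,\chi_2$ into a single Lipschitz function $d$ built from distances to the sets $A$ and $B$, and normalizes the speed with $h(\|\bee'\|)$ rather than normalizing $V$ itself), a globally defined bounded Lipschitz flow, and the standard dichotomy for property (3). Steps 1 and 2 match the paper essentially verbatim, including the observation that boundedness of $\delta\bee$-almost-critical sequences of graphs in $\overline{U_{2\delta}}$ supplies the area bound needed to invoke Proposition \ref{PSc}.

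The one step that would fail as written is the rescaling in Step 3. Replacing $\widetilde V$ by $T\widetilde V$ with $T$ large destroys the only mechanism available for proving the containment $F(1, A_{c+\bar\eta}\cap U)\subset U_{\delta}$ in property (3): since the field is nonzero throughout $U_{2\delta}\setminus U_{\delta}$ (where $\chi_1\in[0,1]$), a trajectory starting in $U$ with energy still in $[c-\bar\eta,c+\bar\eta]$ can drift spatially out of $U_\delta$ before its energy drops, and your parenthetical claim that a trajectory can leave $U_\delta$ ``only by passing through $\bee^{-1}[c-\bar\eta,c+\bar\eta]$ where the energy is already below $c-\bar\eta$'' conflates the spatial and energy localizations; nothing forces low energy at the moment of spatial exit. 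The paper's resolution goes in the opposite direction: keep $\|G\|\le 1$, flow only for time $\delta$, so that $\|w(t,u)-u\|\le t\le\delta$ guarantees trajectories starting in $U$ never leave $U_\delta$, and then choose $\bar\eta$ small relative to the guaranteed energy drop over that fixed time window (the paper takes $\bar\eta<\mu^2/2$; strictly one needs $\bar\eta$ small relative to $\delta\mu^2$). Your argument is repaired by making exactly this choice — small $\bar\eta$ and short (or slow) flow — rather than a large $T$.
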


\begin{proof}

\noindent First we claim the following.

    \begin{claim}\label{subclaim2}
        There are constants $0 < \mu, \eta < 1$ so that 

        \[
        \|\delta\bee(\Gamma(u))\| \geq \mu
        \]

        whenever $u \in (A_{c + \eta} \backslash A_{c - \eta}) \cap U_{2\delta}$.
    \end{claim}

    \begin{proof}[Proof of claim \ref{subclaim2}]
        \noindent Suppose this is not the case. Then there are sequences $\mu_{k}, \eta_{k} \to 0$ as $k \to \infty$ and $u_{k} \in (A_{c + \eta_{k}} \backslash A_{c - \eta_{k}}) \cap U_{2\delta}$ with

        \begin{align}
            &\|\delta\bee(\Gamma(u_{k}))\| \leq \mu_{k},\\
            &\bee(\Gamma(u_{k})) \to c.
        \end{align}

        \noindent The condition \ref{PSc} then implies the existence of $\Omega \in K_{\eps, U_{2\delta}}^{c}$, which contradicts our assumption that $K^{\eps}_{c, U_{2\delta}}  = \emptyset$.
    \end{proof}

    \noindent Now set $\bar{\eta}$ so that

    \[
    0 < \bar{\eta} < \eta, 0 < \bar{\eta} < \frac{\mu^{2}}{2}.
    \]

    \noindent We also set

    \begin{align*}
        &A \doteq \{u \in C^{k, \alpha}(\Sigma) \vert c - \bar{\eta} \leq \bee(u) \leq c + \bar{\eta}\} \cap U_{\delta},\\
        &B \doteq C^{k, \alpha}(\Sigma) \backslash (\{u \in C^{k, \alpha}(\Sigma) \vert c - \bar{\eta} \leq \bee(u) \leq c + \bar{\eta}\} \cap U_{2\delta}).
    \end{align*}

    \noindent We define functions $d: C^{k, \alpha}(\Sigma) \to \mathbf{R}$, $h: \mathbf{R}^{+} \to \mathbf{R}$ by

    \begin{align*}
        &d(u) = \text{dist}(u, B)(\text{dist}(u, A) + \text{dist}(u, B))^{-1},\\
        &h(t) = \begin{cases}
            1, & 0 \leq t \leq 1,\\
            1/t, & t \geq 1.
        \end{cases}
    \end{align*}

    \noindent The function $d$ satisfies $0 \leq d \leq 1$, $d\vert_{B} \equiv 0, d\vert_{A} \equiv 1$ and is locally Lipschitz.

Now we may define a vector field $G:C^{k,\alpha}(\Sigma) \to C^{k, \alpha}(\Sigma)$ by

    \[
    G(u) \doteq -d(u) h(\|\bee'(u)\|) V(u).
    \]
    
    Here $V$ is a pseudogradient field for $\bee$ defined on $B^{r}_{k,\alpha}(0)$ (for the definition of a pseudogradient vector field see \cite{rabinowitz2011variational}, and note that Lemma $1.6$ of \cite{rabinowitz2011variational}, shows that such a $V$ exists, is locally Lipschitz and satisfies $\|V\| \leq 2 \|\bee'\|$ and $\langle \bee', V \rangle \geq \|\bee'\|^{2}$). Note that $G$ is well-defined, as $d$ vanishes outside of $B^{r}_{k,\alpha}(0)$. We claim that it is bounded and locally Lipschitz, and therefore by the general existence and uniqueness theory for ODEs on Banach spaces (see \cite{lang1985differential}, Chapter IV), a solution exists for some maximal time $t^{+} \geq 0$ to the equation

    \begin{align}
        \begin{cases}
            \dot{w}(t) = G(w(t)),\\
            w(0) = u.
        \end{cases}
    \end{align}

    In fact, from boundedness, it then follows that a solution exists for all time. To see this suppose $t^{+} < \infty$ and let $t_{n} \to t^{+}$ from below. Then,

    \[
    \|w(t_{n +1}) - w(t_{n})\| = \left\| \int_{t_{n}}^{t_{n + 1}} G(w(t)) dt \right\| \leq \|G\||t_{n + 1} - t_{n}|.
    \]

    It follows that $w(t_{n}, u)$ converges to some value $\overline{w}$. By local existence and uniqueness we may solve the ODE with initial value $\overline{w}$ for some small time, which now contradicts maximality of $t^{+}$.

    We set $F(t, u) \doteq w(\delta t, u)$. $(1)$, $(2)$ and $(4)$ now follow from the definition of the vector field $G$ and the initial value problem above.

    To see that $G$ is bounded, recall \ref{firstVarest0}, which says that we may rewrite $G(u)$ as
    
    \[
    \|G(u)\| = \left\|4\sigma_{0}d(u) h(\|\bee'(u)\|)\left( \int_{\Sigma} H_{\Gamma(u)}  + E(u)\right)\right\|
    \]

    where $|E(u)| \leq K(\eps) \|u\|_{C^{k, \alpha}(\Sigma)}$. Moreover G vanishes outside of $B_{k, \alpha}^{r}(0),$  $|d|, |h| \leq 1$, and the mean curvature of $\Gamma(u)$ is controlled by the $C^{k, \alpha}$ norm of $u$, so $G$ is bounded.

    \noindent To show $(3)$ suppose that $u \in A_{c + \bar{\eta}} \cap U$. Note that for $t > 0$,

    \begin{align*}
        \|w(t, u) - u\|_{C^{k, \alpha}(\Sigma)} &= \left\|\int_{0}^{t} G( w(\tau, u)) d\tau \right\|_{C^{k, \alpha}(\Sigma)}\\
        &\leq \int_{0}^{t} \|G( w(\tau, u))\|_{C^{k, \alpha}(\Sigma)} d\tau\\
        &\leq t.
    \end{align*}

    It follows for $0 < t < \delta$ that $w(t, u) \in U_{\delta}$ for any $u \in U$.

    If there were some $t$ with $0 \leq t < \delta$ with $\bee(w(t, u)) < c - \bar{\eta}$, then $\bee(w(\delta, u)) < c - \bar{\eta}$, as $\bee(w(t, u))$ is decreasing in $t$, and therefore we would have $w( \delta, u) \in A_{c - \bar{\eta}} \cap U_{\delta}$. Otherwise, we may would have that for all $t \in [0, \delta]$, $w(t, u) \in A$, as

    \[
    c - \bar{\eta} \leq \bee(w(t, u)) \leq \bee(u) \leq c + \bar{\eta}.
    \]

    It follows from the fact that $d \equiv 1$ on $A$ that

    \[
    \frac{d}{dt} \bee(w(t, u)) \leq - h(\|\bee'(w(t, u))\|) \|\bee(w(t, u))\|^{2}.
    \]

    \noindent If $\|\bee'(w(t, u))\| \geq 1$, 

    \[
    \frac{d}{dt} \bee(w(t, u)) = -\|\bee(w(t, u))\| \leq -\mu \leq - \mu^{2}.
    \]

    \noindent Otherwise if $\|\bee'(w(t, u))\| \leq 1,$ then by definition of $h$ we have

    \[
    \frac{d}{dt} \bee(w(t, u)) \leq - \mu^{2}.
    \]

    \noindent Either way we find 

    \[
    \bee(w(\delta, u)) \leq \bee(u) - \mu^{2} \leq c - \bar{\eta},
    \]

    which completes the proof.
\end{proof}

In an admissible neighborhood of a nondegenerate minimal hypersurface, we may derive more quantitative information about the deformation. First suppose $\tilde{\eps} > 0$ is fixed.

\begin{lemma}\label{QuantDef}
    Let $\Sigma$ be a $2$-sided, separating, nondegenerate minimal hypersurface. There is a neighborhood $U_{0}$ of $0$ in $C^{k, \alpha}(\Sigma)$ and $\delta_{0} > 0$ so that $U_{0}, U_{0, 2\delta_{0}}$ are $\tilde{\eps}$-admissible neighborhoods, so that for every $U \subset U_{0}$ where $U$ is an $\tilde{\eps}$-admissible neighborhood of $0$, and $\delta < \delta_{0}$, exactly one of the following holds

    \begin{enumerate}
        \item There is $\eps_{0}$ with $0 < \eps_{0} < \tilde{\eps}$ so that for every $\eps < \eps_{0}$, $K_{U_{\delta}}^{\eps} \neq \emptyset$ \label{case1}
        \item There are constants $\mu_{0}, \eps_{0}$ depending on $U_{0}, \delta_{0}, \tilde{\eps}, \Sigma$, so that for every $\eps < \eps_{0}$ and $u \in U_{\delta} \backslash U$

    \[
    \|\delta \bee(\Gamma(u))\| \geq \mu_{0}.
    \]
    \end{enumerate}

\end{lemma}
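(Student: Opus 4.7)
The plan is to argue by contradiction: suppose that for some admissible $U \subset U_0$ and $\delta < \delta_0$ alternative (2) of the lemma fails. Negating it with parameters $\mu_0 = 1/n$, $\eps_0 = 1/n$ produces, for each $n \geq 1$, some $\eps_n < 1/n$ and $u_n \in U_\delta \setminus U$ with $\|\delta \bee(\Gamma(u_n))\|_{(C^{k,\alpha}(\Sigma))^*} < 1/n$. I derive a contradiction by showing $u_n \to 0$ in $C^{k,\alpha}$, which forces $u_n \in U$ for large $n$ since $0 \in U$ and $U$ is open; this establishes that alternative (1) must hold whenever (2) does not.

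Applying the first variation formula \eqref{firstVarest0} and writing $H_n$ for the pullback of the mean curvature of $\Gamma(u_n)$ to $\Sigma$, the error estimate $|E(f)| \leq C(\Sigma)\eps_n \|f\|_{C^{k,\alpha}}$ yields
\[
\left| 2\sigma_0 \int_\Sigma H_n f \, d\mathcal{H}^{n-1} \right| \leq \left(\tfrac{1}{n} + C(\Sigma) \eps_n\right) \|f\|_{C^{k,\alpha}(\Sigma)},
\]
so $H_n \to 0$ in the dual space $(C^{k,\alpha}(\Sigma))^*$. Since $\{u_n\} \subset U_\delta$ is bounded in $C^{k,\alpha}$, the compact inclusion $C^{k,\alpha}(\Sigma) \subset C^{k,\beta}(\Sigma)$ for $\beta < \alpha$ provides a subsequence with $u_n \to u_\infty$ in $C^{k,\beta}$, where $u_\infty$ lies in the $C^{k,\alpha}$-closure of $U_\delta$. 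The mean curvature operator $\mathcal{M}: u \mapsto H_{\Gamma(u)}$ is continuous from $C^{k,\alpha}(\Sigma)$ to $C^{k-2,\alpha}(\Sigma)$, so $H_n \to \mathcal{M}(u_\infty)$ in $C^{k-2,\beta}$; combined with the weak-$*$ convergence $H_n \to 0$, this forces $\mathcal{M}(u_\infty) \equiv 0$, and hence $\Gamma(u_\infty)$ is a classical minimal hypersurface.

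Nondegeneracy of $\Sigma$ now enters: the linearization $D\mathcal{M}(0) = L_\Sigma$ is an isomorphism $C^{k,\alpha}(\Sigma) \to C^{k-2,\alpha}(\Sigma)$, so the inverse function theorem gives a $C^{k,\alpha}$-neighborhood $V$ of $0$ on which $\mathcal{M}$ is a $C^1$-diffeomorphism. Shrinking $U_0, \delta_0$ so that $U_{0, 2\delta_0} \subset V$, the equation $\mathcal{M}(u_\infty) = 0 = \mathcal{M}(0)$ with $u_\infty \in V$ forces $u_\infty = 0$.

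The main obstacle is then to upgrade the $C^{k,\beta}$ convergence $u_n \to 0$ to $C^{k,\alpha}$ convergence, which is the topology in which $U$ is open. My plan is to combine the quantitative inverse function estimate $\|u\|_{C^{k,\alpha}} \leq C \|\mathcal{M}(u)\|_{C^{k-2,\alpha}}$ on $V$ with the decomposition $\mathcal{M}(u_n) = L_\Sigma u_n + N(u_n)$, in which $N$ is quadratic-and-higher. The $C^{k,\beta}$ smallness of $u_n$ yields $C^{k-2,\beta}$ smallness of $N(u_n)$; meanwhile the $(C^{k,\alpha})^*$ smallness of $H_n$ together with the $C^{k-2,\alpha}$ bound on $H_n$ (inherited from the $C^{k,\alpha}$ bound on $u_n$) interpolate, via Arzel\`a--Ascoli, to $C^{k-2,\beta'}$ smallness of $H_n$ for every $\beta' < \alpha$. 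Iterated Schauder estimates applied to $L_\Sigma u_n = H_n - N(u_n)$ propagate this to $C^{k,\beta'}$ smallness of $u_n$, and a final bootstrap exploiting the quadratic structure of $N$ and the quantitative invertibility of $L_\Sigma$ closes the remaining gap to $C^{k,\alpha}$. This last step, which recovers full H\"older regularity from only distributional smallness of $H_n$, is the technically delicate piece of the argument.
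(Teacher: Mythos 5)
Your opening two-thirds runs parallel to the paper's own argument: negate alternative (2), use the first variation formula \eqref{firstVarest0} to conclude $H_{\Gamma(u_n)}\to 0$ in a weak sense, extract a $C^{k,\beta}$-convergent subsequence by Arzel\`a--Ascoli, and conclude the limit graph is minimal. You diverge in how nondegeneracy enters: the paper runs a diagonal argument over shrinking neighborhoods $U_i$ to manufacture a sequence of minimal graphs distinct from $\Sigma$ converging to $\Sigma$, and extracts a nontrivial Jacobi field; you instead identify the limit directly via the inverse function theorem for the mean curvature operator $\mathcal{M}$, using that $D\mathcal{M}(0)=L_{\Sigma}$ is an isomorphism. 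Your route is cleaner for pinning down $u_{\infty}=0$ and avoids the double-indexed sequence, but note two structural points: (i) your contradiction never invokes the failure of alternative (1), so if it worked it would prove (2) unconditionally --- you should explain why genuine critical points of $\bee$ in $U_{\delta}\setminus U$ (which would make $\delta\bee$ vanish there) cannot occur; and (ii) fixing one pair $(U,\delta)$ at a time produces a $\mu_{0}$ depending on $U$ and $\delta$, not the uniform constant the lemma asserts.

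The genuine gap is exactly the step you flag, and the repair you propose cannot close it. The only control you have on $H_n=\mathcal{M}(u_n)$ is a uniform $C^{k-2,\alpha}$ bound (inherited from $u_n\in U_{\delta}$) together with smallness in the dual norm $(C^{k,\alpha}(\Sigma))^{*}$. These interpolate to $\|H_n\|_{C^{k-2,\beta'}}\to 0$ for every $\beta'<\alpha$ but give no decay of $\|H_n\|_{C^{k-2,\alpha}}$, and Schauder theory for $L_{\Sigma}u_n=H_n-N(u_n)$ can only return regularity at the level of the right-hand side: you obtain $\|u_n\|_{C^{k,\beta'}}\to 0$ for all $\beta'<\alpha$ and the bootstrap then stalls, because the quadratic gain from $N$ still feeds back through the uncontrolled $C^{k-2,\alpha}$ norm of $H_n$. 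The endpoint genuinely fails in general: in a coordinate chart the functions $u_j(x)=j^{-k-\alpha}\phi(jx)$ are bounded in $C^{k,\alpha}$ and converge to $0$ in every $C^{k,\beta'}$ with $\beta'<\alpha$, yet $[D^{k}u_j]_{\alpha}=[D^{k}\phi]_{\alpha}$ is bounded below, so such a sequence never enters a small $C^{k,\alpha}$-ball about $0$. No elliptic estimate rules this out from the hypotheses available. (For what it is worth, the paper's own proof leans on the same soft spot: it asserts that the $C^{k,\beta}$-limit $\tilde{u}_i$ is nonzero because $u_{i,j}\notin U_i$, which does not follow when $U_i$ is open only in $C^{k,\alpha}$.) To make your argument work you need either strictly stronger control on $\delta\bee$, enough to force $\|H_n\|_{C^{k-2,\alpha}}\to 0$ --- after which the quantitative inverse estimate $\|u\|_{C^{k,\alpha}}\le C\|\mathcal{M}(u)\|_{C^{k-2,\alpha}}$ finishes immediately --- or to set up the admissible neighborhoods so that the condition $u\notin U$ is closed under $C^{k,\beta}$-convergence of $C^{k,\alpha}$-bounded sequences.
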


\begin{proof}
    Suppose (\ref{case1}) is not the case. Then if $(2)$ were not true, it would follow that there are neighborhoods $U_{i} \subset {U_{0}}$ with $\bigcap U_{i} = \{0\}$, a sequence $\delta_{i} \to 0$ as $i \to \infty$, sequences $\eps_{i, j}, \mu_{i,j} \to 0$ as $j \to \infty$, and functions $u_{i, j} \in U_{i, \delta_{i}}\backslash U_{i}$ with

    \begin{align}\label{QuantDef1}
    \|\delta \mathcal{B}_{\eps_{i, j}}(\Gamma(u_{i, j}))\| \leq \mu_{i, j}.
    \end{align}

    Write $\Gamma_{i, j}$ for $\Gamma(u_{i, j})$. Note that applying the first variation formula \ref{firstVarest0} to \ref{QuantDef1} tells us that the mean curvatures $H_{\Gamma_{i, j}}$ satisfy

    \begin{align}\label{QuantDef2}
    H_{\Gamma_{i, j}} = o(1) \text{ as } j \to \infty.
    \end{align}

    Since the sequence $u_{i, j}$ is uniformly bounded in $C^{k, \alpha}(\Sigma)$, we may extract a convergent subsequence by the Arzel\`a-Ascoli theorem converging in $C^{k, \beta}(\Sigma)$ for any $\beta < \alpha$ to a nonzero function; call it $\tilde{u}_{i}$. As $k > 2$, the curvatures $H_{\Gamma_{i, j}}$ also converge $H_{\tilde{\Gamma}_{i}}$ (here $\tilde{\Gamma}_{i} \doteq \Gamma(\tilde{u}_{i})$). It follows from \ref{QuantDef2} that the graph $\tilde{\Gamma}_{i}$ of $\tilde{u}_{i}$ is a minimal hypersurface. 

    We have thus constructed a sequence $\tilde{\Gamma}_{i}$ of minimal hypersurfaces, and $\tilde{u}_{i} \to 0$ by construction. It follows that $\tilde{\Gamma}_{i} \to \Sigma$. A standard argument (see, for example, Lemma 4.1 in \cite{sharp2017compactness}) now gives us the existence of a nontrivial Jacobi field on $\Sigma$, contradicting nondegeneracy.
\end{proof}

\begin{remark}
    The quantitative lemma \ref{QuantDef} allows us to say that, when near a nondegenerate minimal hypersurface, provided that we do not run into any phase transitions when applying the deformation lemma \ref{DefLemma}, there is a uniform (in $\eps)$ amount by which we can decrease $\bee$, so long as $\eps$ is sufficiently small.
\end{remark}

\subsection{$1$-parameter mountain pass construction.}\label{MPSubsec1}

With the deformation lemma in hand, we are now ready to prove a mountain pass type result. For simplicity we first present the $1$-parameter version. Let $K_{c}, A_{c}$ be as defined previously

\begin{theorem}[$1$-parameter Mountain Pass Theorem]\label{MPTheorem}
Suppose $\Sigma$ is a $2$-sided, separating, nondegenerate minimal hypersurface and suppose $U, U_{\delta}, U_{2\delta}$ are admissible neighborhoods of $\Sigma$, where $\delta > 0$. There exists $\eps_{0}$ so that for all $\eps < \eps_{0}$ the following holds. Let $u_{0}, u_{1} \in U$ be distinct. Define $\mathcal{P}, c, d$ by

\begin{align}
    &\mathcal{P} \doteq \{p \in C([0, 1], U) \vert p(0) = u_{0}, p(1) = u_{1}\},\\
    &\mathcal{P}_{\delta} \doteq \{p \in C([0, 1], U_{\delta}) \vert p(0) = u_{0}, p(1) = u_{1}\},\\
    &c_{\eps} \doteq \max \{ \bee(p(0)), \bee(p(1))\},\\
    &d_{\eps} \doteq \inf_{p \in \mathcal{P}} \max_{t \in [0, 1]} \bee(p(t))\\
\end{align}

\noindent Then, if $\lim_{\eps \to 0} c_{\eps} < \text{Area}(\Sigma)$, $K_{d_{\eps}}^{\eps} \neq \emptyset$.
\end{theorem}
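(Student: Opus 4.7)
The plan is to argue by contradiction, running the classical mountain pass scheme with the localized deformation Lemma \ref{DefLemma} in place of the usual negative gradient flow. Suppose $K_{d_{\eps}}^{\eps} = \emptyset$ for some arbitrarily small $\eps$. I would first establish the mountain-pass geometry, namely a uniform gap $d_{\eps} - c_{\eps} \geq \kappa > 0$ for all sufficiently small $\eps$. The upper bound $c_{\eps} < 2\sigma_{0}\mathcal{A}(\Sigma) - \kappa_{1}$ follows directly from the hypothesis $\lim_{\eps \to 0} c_{\eps} < \mathrm{Area}(\Sigma)$ together with $\bee(\Sigma) \to 2\sigma_{0}\mathcal{A}(\Sigma)$, while the matching lower bound $d_{\eps} \geq 2\sigma_{0}\mathcal{A}(\Sigma) - \kappa_{1}/2$ is obtained by viewing any $p \in \mathcal{P}$ as a one-parameter sweepout across $\Sigma$ and applying (a one-parameter restriction of) the strong min-max Theorem \ref{min max}.

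Since $K_{d_{\eps}, U_{2\delta}}^{\eps} \subset K_{d_{\eps}}^{\eps} = \emptyset$, Lemma \ref{DefLemma} applied with $c = d_{\eps}$ furnishes constants $0 < \bar{\eta} < \eta$ and a homeomorphism $F$ satisfying $F(1, A_{d_{\eps}+\bar{\eta}} \cap U) \subset A_{d_{\eps}-\bar{\eta}} \cap U_{\delta}$, with $F(1, \cdot)$ fixing all points outside $\bee^{-1}[d_{\eps}-\eta, d_{\eps}+\eta] \cap U_{2\delta}$. I would shrink $\bar{\eta}$ if necessary so that $c_{\eps} + \bar{\eta} < d_{\eps} - \bar{\eta}$ (permissible by the mountain-pass geometry established above) and choose a near-optimal path $p \in \mathcal{P}$ with $\max_{t} \bee(p(t)) < d_{\eps} + \bar{\eta}$. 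Since the endpoints satisfy $\bee(u_{i}) \leq c_{\eps}$, they lie outside the critical band and are therefore fixed by $F(1, \cdot)$; hence $\tilde{p}(t) \doteq F(1, p(t))$ shares endpoints with $p$ and satisfies $\max_t \bee(\tilde{p}(t)) \leq d_{\eps} - \bar{\eta}$.

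The main obstacle is that $\tilde{p}$ only lies in $U_{\delta}$, not in $U$, so it is a priori not an element of $\mathcal{P}$ and does not immediately contradict the definition of $d_{\eps}$. To bridge this gap I would invoke the quantitative Lemma \ref{QuantDef}. In its alternative (1), $K_{U_{\delta}}^{\eps} \neq \emptyset$ for all small $\eps$; pairing this with the Palais--Smale compactness of Proposition \ref{PSc} and a level-set refinement of the deformation argument near $d_{\eps}$, one extracts a critical phase transition at the precise level $d_{\eps}$, contradicting the assumption. In its alternative (2), the uniform lower bound $\|\delta \bee(\Gamma(u))\| \geq \mu_{0}$ on $U_{\delta} \setminus U$ forces the pseudogradient flow underlying $F$ to decrease $\bee$ at rate at least $\mu_{0}^{2}$ throughout $U_{\delta} \setminus U$. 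By tuning the cutoff function $d$ and the flow time $\delta$ in the construction of $F$ so that the total $\bee$-excursion along any flow line is less than $2\bar{\eta}$, the flow line must exit the critical band $\bee^{-1}[d_{\eps}-\eta, d_{\eps}+\eta]$ before it can traverse the annulus $U_{\delta} \setminus U$; combining this with a short $\bee$-decreasing retraction from $U_\delta$ into $U$ (whose existence is guaranteed by the absence of critical points in $U_{\delta} \setminus U$) one arranges $\tilde{p} \in \mathcal{P}$. Either way, $\tilde{p} \in \mathcal{P}$ with $\sup_t \bee(\tilde{p}(t)) < d_{\eps}$, contradicting the definition of $d_{\eps}$ as an infimum. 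The hardest step, and the one genuinely requiring the quantitative control of Lemma \ref{QuantDef}, is confining the mountain-pass deformation to the admissible neighborhood $U$; without that confinement the localized deformation output only into $U_{\delta}$ would be insufficient to contradict the minimax definition.
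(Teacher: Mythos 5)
Your setup is right through the application of Lemma \ref{DefLemma}, and you correctly isolate the real difficulty: the deformed path $\tilde p$ only lands in $U_{\delta}$, so it does not directly contradict the definition of $d_{\eps}$ as an infimum over paths in $U$. But your proposed resolution of that difficulty has a genuine gap. In alternative (2) of Lemma \ref{QuantDef}, the bound $\|\delta\bee(\Gamma(u))\| \geq \mu_{0}$ on $U_{\delta}\setminus U$ does \emph{not} yield a ``$\bee$-decreasing retraction from $U_{\delta}$ into $U$'': absence of critical points in an annular region gives no control on the \emph{direction} of the pseudogradient, which may be tangential to, or point away from, $U$; a retraction onto $U$ would require the flow to be suitably transverse to $\partial U$, which nothing in Lemma \ref{QuantDef} provides. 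Your treatment of alternative (1) is likewise incomplete: $K_{U_{\delta}}^{\eps}\neq\emptyset$ produces a critical point at \emph{some} level, and the ``level-set refinement'' needed to place it at the level $d_{\eps}$ is precisely the content of the theorem being proved, so this is circular as stated.

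The paper resolves the confinement problem without ever returning the path to $U$. It introduces the auxiliary value $d_{\eps}^{\delta} \doteq \inf_{p\in\mathcal{P}_{\delta}}\max_{t}\bee(p(t))$ and uses the strong min-max property (Theorem \ref{min max}, applied in the admissible neighborhood $U_{\delta}$, together with $d_{\eps}\to\mathrm{Area}(\Sigma)$) to get $d_{\eps}-d_{\eps}^{\delta}=o(1)$ as $\eps\to 0$. One then chooses $\bar\eta$ with $d_{\eps}-d_{\eps}^{\delta}<\bar\eta<\eta<d_{\eps}-c_{\eps}$, so that the deformed path $\bar a = F^{\eps}(a)\in\mathcal{P}_{\delta}$ satisfies $\max_{t}\bee(\bar a(t))\leq d_{\eps}-\bar\eta<d_{\eps}^{\delta}$, contradicting the definition of $d_{\eps}^{\delta}$ rather than that of $d_{\eps}$. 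This is where Theorem \ref{min max} genuinely enters the argument — as a lower bound for the min-max value over the \emph{enlarged} class $\mathcal{P}_{\delta}$ — not as a device for keeping the deformation inside $U$. If you want to salvage your route, you should replace the retraction step by this comparison with $d_{\eps}^{\delta}$.
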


\begin{proof}
If it were not the case, then we could find infinitely many values of $\eps$ with $\eps \to 0$ so that $K^{\eps}_{d_{\eps}}$ is empty. We set

\begin{align}
    d_{\eps}^{\delta} \doteq \inf_{p \in P_{\delta}} \max_{t \in [0, 1]} \bee(p(t)),
\end{align}

and we note that possibly $d_{\eps}^{\delta} < d_{\eps}$. However, from the continuity in $\eps$ of the min-max values \ref{min max} it follows that $d_{\eps} - d_{\eps}^{\delta} = o(1)$ as $\eps \to 0$, and $d_{\eps} \to \text{Area}(\Sigma)$. Therefore it follows from \ref{QuantDef} and \ref{DefLemma} that there is an $\eps_{0}$ so that for all $\eps < \eps_{0}$ we can find positive constants $\bar{\eta}, \eta$ satisfying $d_{\eps} - d_{\eps}^{\delta} < \bar{\eta} < \eta < d_{\eps} - c_{\eps}$ and homeomorphisms $F^{\eps}: C^{k, \alpha}(\Sigma) \to C^{k,\alpha}(\Sigma)$ satisfying

\begin{align}
    &F^{\eps}(A_{d_{\eps} + \bar{\eta}}^{\eps} \cap U) \subset A_{d_{\eps} - \bar{\eta}}^{\eps} \cap U_{\delta},\\
    &F^{\eps}(u) = u & \text{ if } u \not\in \bee^{-1}[d_{\eps} - \eta, d_{\eps} + \eta] \cap U_{2 \delta}.
\end{align}

\noindent Take some path $a \in \mathcal{P}$ satisfying

\[
\max_{t \in [0, 1]} \bee(a(t)) \leq d_{\eps} + \bar{\eta}.
\]

\noindent Then $\bar{a} \doteq F^{\eps}(a) \in \mathcal{P}_{\delta}$. The deformation lemma now implies that

\[
\max_{t \in [0, 1]} \bee(\bar{a}(t)) \leq d_{\eps} - \bar{\eta} < d_{\eps}^{\delta},
\]

a contradiction. 
\end{proof}

\subsection{$l$-paramter mountain pass construction}

\begin{theorem}[$l$-parameter Mountain Pass Theorem]\label{MPTheoremk}
Let $\Sigma$ is a $2$-sided, separating, nondegenerate minimal hypersurface and suppose $U, U_{\delta}, U_{2\delta}$ are admissible neighborhoods of $\Sigma$, where $\delta > 0$. There exists $\eps_{0}$ so that for all $\eps < \eps_{0}$ the following holds. Consider an $l$-dimensional subspace $Z \subset C^{k,\alpha}(\Sigma)$ and $\rho > 0$ with $B_{\rho}(0) \subset U$ and set $Q \doteq \partial B_{\rho}(0) \cap Z$. Let $\gamma_{0} \in C^{0}(\partial B^{l}, Q)$ be fixed. Define $\mathcal{P}, P_{\delta}, c_{\eps}, d_{\eps}$ by

\begin{align}
    &\mathcal{P} \doteq \{p \in C(B^{l}, U) \vert p\vert_{\partial B^{l}} = \gamma_{0},\\
    &\mathcal{P}_{\delta} \doteq \{p \in C([0, 1], U_{\delta}) \vert p\vert_{\partial B^{l}} = \gamma_{0}\},\\
    &c_{\eps} \doteq \max_{v \in \partial B^{l}} \{\bee(p(v)) \},\\
    &d_{\eps} \doteq \inf_{p \in \mathcal{P}} \max_{v \in \partial B^{l}} \bee(p(v))\\
\end{align}

\noindent Then, if $\lim_{\eps \to 0} c_{\eps} < \text{Area}(\Sigma)$, $K_{d_{\eps}}^{\eps} \neq \emptyset$.
\end{theorem}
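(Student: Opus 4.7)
The plan is to repeat the argument in the proof of Theorem \ref{MPTheorem} virtually verbatim, with the two-point boundary constraint replaced by the $l$-parameter constraint $p\vert_{\partial B^l}=\gamma_0$. The only genuinely new step is to check that the deformation supplied by Lemma \ref{DefLemma} preserves this boundary condition; this will follow from an energy gap between $c_\eps$ and the min-max level $d_\eps$.

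Suppose for contradiction that $K^\eps_{d_\eps}=\emptyset$ along a sequence $\eps\to 0$. Introduce the auxiliary min-max value
\[
d_\eps^\delta \doteq \inf_{p\in\mathcal P_\delta} \max_{v\in B^l} \bee(p(v)).
\]
Trivially $d_\eps^\delta \leq d_\eps$, and by continuity of the min-max value in $\eps$ (exactly as invoked in the 1-parameter case, via Theorem \ref{min max} and the $\Gamma$-convergence result from \cite{marxkuo2023geometric}) one has $d_\eps - d_\eps^\delta = o(1)$ as $\eps\to 0$ and $d_\eps$ converges to $\mathcal{A}(\Sigma)$ in the appropriate normalization. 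Together with $\lim c_\eps < \mathcal{A}(\Sigma)$, this produces a uniform gap $d_\eps > c_\eps + 3\eta_0$ for some $\eta_0 > 0$ and all sufficiently small $\eps$.

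Applying Lemma \ref{QuantDef} to get a uniform lower bound on $\|\delta\bee\|$ on $U_\delta\setminus U$, and Lemma \ref{DefLemma} at the level $c = d_\eps$, I obtain constants $0<\bar\eta<\eta<\eta_0$ and homeomorphisms $F^\eps$ of $C^{k,\alpha}(\Sigma)$ with
\[
F^\eps(A_{d_\eps+\bar\eta}^\eps \cap U) \subset A_{d_\eps - \bar\eta}^\eps \cap U_\delta, \quad F^\eps=\id \text{ outside } \bee^{-1}[d_\eps-\eta, d_\eps+\eta]\cap U_{2\delta}.
\]
Pick an almost-optimal $a \in \mathcal P$ with $\max_{v\in B^l} \bee(a(v)) \leq d_\eps + \bar\eta$. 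For $v \in \partial B^l$, $a(v)=\gamma_0(v)\in Q \subset U$ satisfies $\bee(\gamma_0(v)) \leq c_\eps < d_\eps - \eta$, so $\gamma_0(v)$ lies outside the band on which $F^\eps$ acts nontrivially. Hence $(F^\eps\circ a)\vert_{\partial B^l}=\gamma_0$, and since $a$ takes values in $U$, the deformation property sends $F^\eps\circ a$ into $U_\delta$; so $F^\eps\circ a \in \mathcal P_\delta$. But $\max_v \bee(F^\eps\circ a(v)) \leq d_\eps - \bar\eta < d_\eps^\delta$ (for $\eps$ small, using $d_\eps - d_\eps^\delta = o(1)$), contradicting the definition of $d_\eps^\delta$.

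The main obstacle is checking that the deformation preserves the $l$-parameter boundary class, i.e.\ that $F^\eps$ fixes $\gamma_0$ pointwise. This reduces to the energy gap $c_\eps < d_\eps - \eta$ along $\partial B^l$, which in turn requires (i) Theorem \ref{min max} to know $d_\eps$ is not below $\mathcal{A}(\Sigma)$ in the limit, and (ii) the continuity-in-$\eps$ statement $d_\eps^\delta \geq d_\eps - o(1)$. Once these are secured, the rest of the argument is dimension-independent: the role of $B^l$ is merely to host the parametrization, and the pointwise action of Lemma \ref{DefLemma} preserves the topological constraint automatically.
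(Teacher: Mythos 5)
Your proposal is correct and follows essentially the same route as the paper: reduce to the $1$-parameter argument, introduce the relaxed value $d_\eps^\delta$, use the continuity of the min-max values from Theorem \ref{min max} to force $d_\eps-d_\eps^\delta=o(1)$, and push an almost-optimal map down with the deformation of Lemmas \ref{QuantDef} and \ref{DefLemma} to contradict the definition of $d_\eps^\delta$. Your explicit verification that $F^\eps$ fixes $\gamma_0$ pointwise (because $\bee(\gamma_0(v))\leq c_\eps<d_\eps-\eta$ places the boundary values outside the band where the deformation acts) is a detail the paper leaves implicit, but it is the same argument.
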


\begin{proof}
The proof is identical to the case of $1$-parameter. If it were not the case, then we could find infinitely many values of $\eps$ with $\eps \to 0$ so that $K^{\eps}_{d_{\eps}}$ is empty. We set

\begin{align}
    d_{\eps}^{\delta} \doteq \inf_{p \in P_{\delta}} \max_{v \in \partial B^{l}} \bee(p(v)),
\end{align}

and we note that possible $d_{\eps}^{\delta} < d_{\eps}$. However, from the continuity in $\eps$ of the min-max values \ref{min max} it follows that $d_{\eps} - d_{\eps}^{\delta} = o(1)$ as $\eps \to 0$ and $d_{\eps} \to \text{Area}(\Sigma)$. Therefore it follows from \ref{QuantDef} and \ref{DefLemma} that there is an $\eps_{0}$ so that for all $\eps < \eps_{0}$ we can find positive constants $\bar{\eta}, \eta$ satisfying $d_{\eps} - d_{\eps}^{\delta} < \bar{\eta} < \eta < d_{\eps} - c_{\eps}$ and homeomorphisms $F^{\eps}: C^{k, \alpha}(\Sigma) \to C^{k,\alpha}(\Sigma)$ satisfying

\begin{align}
    &F^{\eps}(A_{d_{\eps} + \bar{\eta}}^{\eps} \cap U) \subset A_{d_{\eps} - \bar{\eta}}^{\eps} \cap U_{\delta},\\
    &F^{\eps}(u) = u & \text{ if } u \not\in \bee^{-1}[d_{\eps} - \eta, d_{\eps} + \eta] \cap U_{2 \delta}.
\end{align}

\noindent As in the proof of the theorem for $1$ parameter. some map $a \in \mathcal{P}$ satisfying

\[
\max_{v \in \partial B^{l}} \bee(a(v)) \leq d_{\eps} + \bar{\eta}.
\]

\noindent Then $\bar{a} \doteq F^{\eps}(a) \in \mathcal{P}_{\delta}$. The deformation lemma now implies that

\[
\max_{v \in \partial B^{l}} \bee(\bar{a}(v)) \leq d_{\eps} - \bar{\eta} < d_{\eps}^{\delta},
\]

a contradiction. 
\end{proof}

Suppose $\Sigma$ now is a $2$-sided, separating, nondegenerate minimal hypersurface of index $l$. In order to see how \ref{PacRit} follows from theorem \ref{MPTheoremk}, let $U$ be an admissible neighborhood of $\Sigma$ and set $Z$ to be the subspace of $C^{k, \alpha}(\Sigma)$ spanned by the first $l$-eigenfunctions of the stability operator of $\Sigma$. It follows immediately that, for $\rho > 0$ sufficiently small that $B_{\rho}(0) \cap Z \subset U$, that 

\[
\lim_{\eps \to 0} \max_{u \in \partial B_{\rho}(0) \cap Z} \bee(u) < \text{Area}(\Sigma),
\]

and so, applying theorem \ref{MPTheoremk} the existence of a smooth $\eps$-phase transition whose Allen--Cahn energy is $d_{\eps}$ and whose zero set is contained in the closure of $U_{2\delta}$. Applying the argument to a decreasing sequence of admissible neighborhoods of $\Sigma$ gives a sequence of such $\eps$-phase transitions with $\eps \to 0$. The result follows by the continuity of the min-max values \ref{min max}.
\appendix

\section{Smooth dependence on the domain.}\label{SmoothDependenceDomainssec}

Let $k > 2$ be a given integer, and suppose that $\Omega \subset (M, g)$ is a domain with $C^{k, \alpha}$ boundary (i.e., the inclusion map $i_{\Omega} \in C^{k, \alpha}(\Omega; M)$) which supports a positive Dirichlet phase transition vanishing on the boundary. Take a neighborhood $U \subset C^{k, \alpha}(\Omega; M)$ consisting only of $C^{k, \alpha}$ embeddings of $\Omega$. Suppose that $F \in U$ and let $u_{F}$ be the unique positive solution, afforded by \cite{brezis1986remarks}, to the problem
\[
\begin{cases}
\eps^2 \Delta_{g} u = W'(u) & \text{in } F(\Omega),\\
u \equiv 0 & \text{on } \partial F(\Omega).
\end{cases}
\]
In \cite{marxkuo2023geometric} it was shown that the dependence $F \mapsto u_{F}$ is smooth, provided the neihgborhood $U$ is taken sufficiently small. Let $F_{t}$ be a smooth one parameter family of diffeomorphisms with $F_{0} = \text{id}_{M},$ $\partial_{t}F_{t} \doteq X_{t}$ and $\nabla_{\partial_{t}}X_{t} = 0 $. Moreover, set $X \doteq X_{0}$ for the variation vector field. It follows that $t \mapsto u_{t} \doteq u_{F_{t}}$ is a smooth one parameter family of maps. It is a straightforward computation to show that the time-derivative $\dot{u}_{t}$ satisfies the boundary value problem:

\begin{align} \label{eqfordotu}
    \begin{cases}
        \Delta_{g}\dot{u}_{t} = W''(u_{t})\dot{u_{t}} \text{ in } \Omega_{t},\\
        \dot{u}_{t} = - \langle X_{t}, \nabla^{M} u_{t} \rangle \text{ on } \partial \Omega_{t},
    \end{cases}
\end{align}

\printbibliography
\end{document}